\author{Shoji Yokura$^{(*)}$}
\address
{Department of Mathematics and Computer Science, 
Faculty of Science, 
Kagoshima University, 21-35 Korimoto 1-chome, Kagoshima 890-0065, Japan}
\email {yokura@sci.kagoshima-u.ac.jp}
\title
{Oriented bivariant theories, I}
\thanks {(*) Partially supported by Grant-in-Aid for Scientific Research
(No. 19540094), the Ministry of Education, Culture, Sports, Science and Technology (MEXT), and JSPS Core-to-Core Program 18005, Japan \\
\quad \emph{keywords} : Fulton--MacPherson's bivariant theory, (co)bordism, Chern classes, Grothendieck--Riemann--Roch, $K$-theory\\
\quad \emph{Mathematics Subject Classification 2000}: 55N35, 55N22, 14C17, 14C40, 14F99, 19E99}
\begin{document} 
\numberwithin{equation}{section}
\newtheorem{thm}[equation]{Theorem}
\newtheorem{pro}[equation]{Proposition}
\newtheorem{prob}[equation]{Problem}
\newtheorem{cor}[equation]{Corollary}
\newtheorem{lem}[equation]{Lemma}
\theoremstyle{definition}
\newtheorem{ex}[equation]{Example}
\newtheorem{defn}[equation]{Definition}
\newtheorem{rem}[equation]{Remark}
\renewcommand{\rmdefault}{ptm}
\def\alp{\alpha}
\def\be{\beta}
\def\jeden{1\hskip-3.5pt1}
\def\om{\omega}
\def\bigstar{\mathbf{\star}}
\def\ep{\epsilon}
\def\vep{\varepsilon}
\def\Om{\Omega}
\def\la{\lambda}
\def\La{\Lambda}
\def\si{\sigma}
\def\Si{\Sigma}
\def\Cal{\mathcal}
\def\ga{\gamma}
\def\Ga{\Gamma}
\def\de{\delta}
\def\De{\Delta}
\def\bF{\mathbb{F}}
\def\bH{\mathbb H}
\def\bPH{\mathbb {PH}}
\def \bB{\mathbb B}
\def \bA{\mathbb A}
\def \bOB{\mathbb {OB}}
\def \bM{\mathbb M}
\def \bOM{\mathbb {OM}}
\def \calB{\mathcal B}
\def \bK{\mathbb K}
\def \bG{\mathbf G}
\def \bL{\mathbf L}
\def\bN{\mathbb N}
\def\bR{\mathbb R}
\def\bP{\mathbb P}
\def\bZ{\mathbb Z}
\def\bC{\mathbb  C}
\def \bQ{\mathbb Q}
\def\op{\operatorname}

\begin{abstract} In 1981 W. Fulton and R. MacPherson introduced the notion of bivariant theory ({\bf BT}), which is a sophisticated unification of  covariant theories and contravariant theories. This is for the study of singular spaces. In 2001 M. Levine and F. Morel introduced the notion of algebraic cobordism, which is a universal oriented Borel--Moore functor with products ({\bf OBMF}) of geometric type, in an attempt to understand better V. Voevodsky's (higher) algebraic cobordism. In this paper we introduce a notion of oriented bivariant theory ({\bf OBT}), a special case of which is nothing but the oriented Borel--Moore functor with products. The present paper is a first one of the series to try to understand Levine--Morel's algebraic cobordism from a bivariant-theoretical viewpoint, and its first step is to introduce {\bf OBT} as a unification of {\bf BT} and {\bf OBMF}.
\end{abstract}

\maketitle

\section{Introduction}\label{intro} 

William Fulton and Robert MacPherson have introduced the notion of bivariant theory as a {\it categorical framework for the study of singular spaces}, which is the title of their AMS Memoir book \cite{Fulton-MacPherson} (see also Fulton's book
\cite{Fulton-book}). The main objective of \cite {Fulton-MacPherson} is bivariant-theoretic Riemann--Roch's or bivariant analogues of various theorems of Grothendieck--Riemann--Roch type. \\

Vladimir Voevodsky has introduced algebraic cobordism (now called {\it higher algebraic cobordism}), which was used in his proof of Milnor's conjecture \cite{Voevodsky}. Daniel Quillen introduced the notion of {\it (complex ) oriented cohomology theory} on the category of differential manifolds \cite{Quillen} and this notion can be formally extended to the category of smooth schemes in algebraic geometry. Marc Levine and Fabien Morel constructed {\it a universal oriented cohomology theory}, which  they also call {\it algebraic cobordism}, and have investigated furthermore (see \cite{Levine-ICM}, \cite{Levine-survey}, \cite{LM-CRAS1}, \cite{LM-CRAS2}, \cite{LM-book} and see also  \cite{Merkurjev} for a condensed review). Recently  M. Levine and R. Pandharipande \cite{LP} gave another equivalent construction of the algebraic cobodism via what they call ``double point degeneration" and they found a nice application of the algebraic cobordism in the Donaldson--Thomas theory of 3-folds. \\

In this paper we extend Fulton--MacPherson's bivariant theory to what we call {\it an oriented bivariant theory} for a general category, not just for a geometric category of, say, complex algebraic varieties, schemes, etc. In most interesting cases bivariant theories such as \emph{bivariant homology theory, bivariant Chow group theory, bivariant algebraic K-theory and bivariant topological K-theory} are already oriented bivariant theories. We show that even in this general category there exists {\it a universal oriented bivariant theory}, whose special case gives rise to {\it a universal oriented Borel--Moore functor with products}. Levine--Morel's algebraic cobordism requires more geometrical conditions.
Indeed, they call algebraic cobordism  {\it a universal oriented Borel--Moore functor with products \underline {of geometric type}} \cite{LM-book}. In a second paper \cite{Yokura-bac} we will deal with an oriented bivariant theory of geometric type. In \cite {Yokura-bbg} we apply our approach to (co)bordism groups. \\

One purpose of this paper is to bring Fulton--MacPherson's Bivariant Theory to the attention of people working on algebraic cobordism and/or subjects related to it.

\section {Fulton--MacPherson's Bivariant Theory}\label{FM-BT}

We make a quick review of Fulton--MacPherson's bivariant theory \cite {Fulton-MacPherson}. 

Let $\Cal V$ be a category which has a final object $pt$ and on which the fiber product or fiber square is well-defined. Also we consider a class of maps, called ``confined maps" (e.g., proper maps, projective maps, in algebraic geometry), which are closed under composition and base change and contain all the identity maps, and a class of fiber squares, called ``independent squares" (or ``confined squares", e.g., ``Tor-independent" in algebraic geometry, a fiber square with some extra conditions required on morphisms of the square), which satisfy the following:

(i) if the two inside squares in  

$$\CD
X''@> {h'} >> X' @> {g'} >> X \\
@VV {f''}V @VV {f'}V @VV {f}V\\
Y''@>> {h} > Y' @>> {g} > Y \endCD
$$

or

$$\CD
X' @>> {h''} > X \\
@V {f'}VV @VV {f}V\\
Y' @>> {h'} > Y \\
@V {g'}VV @VV {g}V \\
Z'  @>> {h} > Z \endCD
$$

are independent, then the outside square is also independent,

(ii) any square of the following forms are independent:
$$
\xymatrix{X \ar[d]_{f} \ar[r]^{\op {id}_X}&  X \ar[d]^f & & X \ar[d]_{\op {id}_X} \ar[r]^f & Y \ar[d]^{\op {id}_Y} \\
Y \ar[r]_{\op {id}_X}  & Y && X \ar[r]_f & Y}
$$
where $f:X \to Y$ is \emph{any} morphism. \\

A bivariant theory $\bB$ on a category $\Cal V$ with values in the category of graded abelian groups is an assignment to each morphism
$$ X  \xrightarrow{f} Y$$
in the category $\Cal V$ a graded abelian group (in most cases we ignore the grading )

$$\bB(X  \xrightarrow{f} Y)$$
which is equipped with the following three basic operations. The $i$-th component of $\bB(X  \xrightarrow{f} Y)$, $i \in \bZ$, is denoted by $\bB^i(X  \xrightarrow{f} Y)$.\\

{\bf Product operations}: For morphisms $f: X \to Y$ and $g: Y
\to Z$, the product operation
$$\bullet: \bB^i( X  \xrightarrow{f}  Y) \otimes \bB^j( Y  \xrightarrow{g}  Z) \to
\bB^{i+j}( X  \xrightarrow{gf}  Z)$$
is  defined.

{\bf Pushforward operations}: For morphisms $f: X \to Y$
and $g: Y \to Z$ with $f$ \emph {confined}, the pushforward operation
$$f_*: \bB^i( X  \xrightarrow{gf} Z) \to \bB^i( Y  \xrightarrow{g}  Z) $$
is  defined.

{\bf Pullback operations}: For an \emph{independent} square
$$\CD
X' @> g' >> X \\
@V f' VV @VV f V\\
Y' @>> g > Y, \endCD
$$
the pullback operation
$$g^* : \bB^i( X  \xrightarrow{f} Y) \to \bB^i( X'  \xrightarrow{f'} Y') $$
is  defined.

These three operations are required to satisfy the seven compatibility axioms (see \cite [Part I, \S 2.2]{Fulton-MacPherson} for details):\\

(B-1) product is associative, 

(B-2) pushforward is functorial,

 (B-3) pullback is functorial, 

(B-4) product and pushforward commute,

 (B-5) product and pullback commute, 

(B-6) pushforward and pullback commute, and 

(B-7) projection formula. \\

We also assume that $\bB$ has units:

{\bf Units}: $\bB$ has units, i.e., there is an element $1_X \in \bB^0( X  \xrightarrow{\op {id}_X} X)$ such that $\alp \bullet 1_X = \alp$ for all morphisms $W \to X$, all $\alp \in \bB(W \to X)$; such that $1_X \bullet \beta = \beta $ for all morphisms $X \to Y$, all $\beta \in \bB(X \to Y)$; and such that $g^*1_X = 1_{X'}$ for all $g: X' \to X$. \\

Let $\bB, \bB'$ be two bivariant theories on a category $\Cal V$. Then
a {\it Grothendieck transformation} from $\bB$ to $\bB'$
$$\ga : \bB \to \bB'$$
is a collection of homomorphisms
$$\bB(X \to Y) \to \bB'(X \to Y) $$
for a morphism $X \to Y$ in the category $\Cal V$, which preserves the above three basic operations: 

\hskip1cm (i) \quad $\ga (\alp \bullet_{\bB} \be) = \ga (\alp) \bullet _{\bB'} \ga (\be)$, 

\hskip1cm  (ii) \quad $\ga(f_{*}\alp) = f_*\ga (\alp)$, and 

\hskip1cm  (iii) \quad $\ga (g^* \alp) = g^* \ga (\alp)$. \\

For more details of interesting geometric and/or topological examples of bivariant theories (e.g., bivariant theory of constructible functions, bivariant homology theory, bivariant $K$-theory, etc.,) and Grothendieck transformations among bivariant theories, see \cite {Fulton-MacPherson}. In this paper we treat with bivariant theories more abstractly from a general viewpoint. \\

A bivariant theory unifies both a covariant theory and a contravariant theory in the following sense:

$\bB_*(X):= \bB(X \to pt)$ becomes a covariant functor for {\it confined}  morphisms and 

$\bB^*(X) := \bB(X  \xrightarrow{id}  X)$ becomes a contravariant functor for {\it any} morphisms. 

\noindent
A Grothendieck transformation $\ga: \bB \to \bB'$ induces natural transformations $\ga_*: \bB_* \to \bB_*'$ and $\ga^*: \bB^* \to {\bB'}^*$.

As to the grading, $\bB_i(X):= \bB^{-i}(X  \xrightarrow{id}  X)$ and
$\bB^j(X):= \bB^j(X  \xrightarrow{id}  X)$.
  
In the rest of the paper we assume that our bivariant theories are \emph{commutative} (see \cite {Fulton-MacPherson}, \S 2.2), i.e., if whenever both

$$
\xymatrix{W \ar[d]_{f'} \ar[r]^{g'}&  X \ar[d]^f && W \ar[d]_{g'} \ar[r]^{f'}& Y \ar[d]^{g} \\
Y \ar[r]_{g}  & Z && X \ar[r]_f & Z}
$$
are independent squares, then for 
$\alp \in \bB(X  \xrightarrow {f} Z)$ and $\be \in \bB(Y  \xrightarrow {g} Z)$
$$g^*(\alp) \bullet \be = f^*(\be) \bullet \alp.$$
(Note: if $g^*(\alp) \bullet \be = (-1)^{\op {deg}(\alp) \op{deg}(\be)} f^*(\be) \bullet \alp$, then it is called \emph{skew-commutative}.)

\begin{defn}\label{canonical}(\cite{Fulton-MacPherson}, Part I, \S 2.6.2 Definition) Let $\Cal S$ be a class of maps in $\Cal V$, which is closed under compositions and containing all identity maps. Suppose that to each $f: X \to Y$ in $\Cal S$ there is assigned an element
$\theta(f) \in \bB(X  \xrightarrow {f} Y)$ satisfying that
\begin{enumerate}
\item [(i)] $\theta (g \circ f) = \theta(f) \bullet \theta(g)$ for all $f:X \to Y$, $g: Y \to Z \in \Cal S$ and

\item [(ii)] $\theta(\op {id}_X) = 1_X $ for all $X$ with $1_X \in \bB^*(X):= B(X  \xrightarrow{\op {id}_X} X)$ the unit element.
\end{enumerate}
Then $\theta(f)$ is called a {\it canonical orientation} of $f$.
\end{defn} 

Note that such a canonical orientation makes the covariant functor $\bB_*(X)$ a contravariant functor for morphisms in $\Cal S$, and also makes the contravariant functor $\bB^*$ a covariant functor for morphisms in $\Cal C \cap \Cal S$: Indeed, 

(*) for a morphism $f: X \to Y \in \Cal S$ and the canonical orientation $\theta$ on $\Cal S$ the following {\it Gysin homomorphism}
$$f^!: \bB_*(Y) \to \bB_*(X) \quad \text {defined by} \quad  f^!(\alp) :=\theta(f) \bullet \alp$$
 is {\it contravariantly functorial}. And 

(**) for a fiber square (which is an independent square by hypothesis)
$$\CD
X @> f >> Y \\
@V {\op {id}_X} VV @VV {\op {id}_Y}V\\
X @>> f > Y, \endCD
$$
where $f \in \Cal C \cap  \Cal S$, the following {\it Gysin homomorphism}
$$f_!: \bB^*(X) \to \bB^*(Y) \quad \text {defined by} \quad
f_!(\alp) := f_*(\alp \bullet \theta (f))$$
is {\it covariantly functorial}.
The notation should carry the information of $\Cal S$ and the canonical orientation $\theta$, but it will be usually omitted if it is not necessary to be mentioned. Note that the above conditions (i) and (ii) of Definition (\ref{canonical}) are certainly necessary for the above Gysin homomorphisms to be functorial. \\

\begin{defn} (i) Let $\Cal S$ be another class of maps called ``specialized maps" (e.g., smooth maps in algebraic geometry) in $\Cal V$ , which is closed under composition, closed under base change and containing all identity maps. Let $\bB$ be a bivariant theory. If $\Cal S$ has canonical orientations in $\bB$, then we say that $\Cal S$ is canonically $\bB$-oriented and an element of $\Cal S$ is called a canonically $\bB$-oriented morphism. (Of course $\Cal S$ is also a class of confined maps, but since we consider the above extra condition of $\bB$-orientation  on $\Cal S$, we give a different name to $\Cal S$.)

(ii) Let $\Cal S$ be as in (i). Let $\bB$ be a bivariant theory and $\Cal S$ be canonically $\bB$-oriented. Furthermore, if the orientation $\theta$ on $\Cal S$ satisfies that for an independent square with $f \in \Cal S$
$$
\CD
X' @> g' >> X\\
@Vf'VV   @VV f V \\
Y' @>> g > Y
\endCD
$$
the following condition holds 
\[\theta (f') = g^* \theta (f),\]
(which means that the orientation $\theta$ preserves the pullback operation), then we call $\theta$ a {\it nice canonical orientation} and say that $\Cal S$ is {\it nice canonically $\bB$-oriented} and an element of $\Cal S$ is called {\it a nice canonically $\bB$-oriented morphism} .
\end{defn}

In the following proposition we deal with \emph {cross product} (\cite{Fulton-MacPherson}, Part I, \S 2.4 External products). For that we need the assumption that all the four small squares in the following big diagrams are independent (hence any square is independent):
$$\CD
X \times Y @> {f \times Id_Y} >> X' \times Y @> {p \times Id_Y} >>  Y \\
@V {Id_X \times g }VV @VV {Id_{X'} \times g}V @VV {g}V\\
X \times Y' @> {f \times Id_{Y'}} >> X' \times Y' @> {p \times Id_{Y'}} >> Y' \\
@V {Id_X \times q }VV @VV {Id_{X'} \times q}V @VV {q}V\\
X  @>> {f} > X'  @>> {p}> pt .\endCD
$$

\begin {pro}  Let $\bB$ be a bivariant theory and let $\Cal S$ be as above. 

(1) Define the natural exterior product 
$$\times : \bB(X \xrightarrow {} pt ) \times \bB(Y  \xrightarrow{\pi_Y}  pt) \to \bB( X \times Y \to pt)$$
by 
$$\alp \times \be := \pi_Y^*\alp \bullet \be.$$
Then the covariant functor $\bB_*$ for confined morphisms and the contravariant functor $\bB_*$ for morphisms in $\Cal S$ are both compatible with the exterior product, i.e., for confined morphisms $f:X \to X'$, $g:Y \to Y'$, 
$$(f \times g)_*(\alp \times \be) = f_*\alp \times g_*\be$$
and for morphisms $f:X \to X'$, $g:Y \to Y'$ in $\Cal S$,
$$(f \times g)^!(\alp' \times \be') = f^!\alp' \times g^!\be'.$$

(2) Similarly, define the natural exterior product 
$$\times : \bB(X \xrightarrow {\op {id}_X} X) \times \bB(Y  \xrightarrow{\op {id}_Y}  Y) \to \bB( X \times Y  \xrightarrow {\op {id}_{X \times X}} X \times X)$$
by 
$$\alp \times \be := {p_1}^*\alp \bullet {p_2}^*\be$$
where $p_1:X \times Y \to X$ and $p_2:X \times Y \to Y$ be the projections.

Then the contravariant functor $\bB^*$ for any morphisms and the covariant functor $\bB^*$ for morphisms in $\Cal C \cap \Cal S$ are both compatible with the exterior product, i.e., for any morphisms $f:X \to X'$, $g:Y \to Y'$, 
$$(f \times g)^*(\alp \times \be) = f^*\alp \times g^*\be$$
and for morphisms $f:X \to X'$, $g:Y \to Y'$ in $\Cal C \cap \Cal S$,
$$(f \times g)_!(\alp' \times \be') = f_!\alp' \times g_!\be'.$$
\end{pro}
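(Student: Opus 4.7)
The strategy for all four identities is identical: expand the exterior product by its definition, factor the product morphism as the composition $f \times g = (\op{id}_{X'} \times g) \circ (f \times \op{id}_Y)$ through the intermediate space, and then mechanically apply the bivariant axioms (B-1)--(B-7) together with the assumed commutativity of $\bB$. The hypothesis that every small square in the displayed diagram preceding the proposition is independent is exactly what licences each pullback and each push-pull commutation encountered along the way.

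For the pushforward identity of part (1), expand $\alpha \times \beta = \pi_Y^*\alpha \bullet \beta$, apply functoriality of pushforward (B-2) to the factorization above, and use (B-4) to move each partial pushforward past the product with $\beta$. The push-pull axiom (B-6), applied to the appropriate independent square of projections, converts $(\op{id}_X \times g)_*\pi_Y^*\alpha$ into $\pi_{Y'}^*\alpha$ and then $(f \times \op{id}_{Y'})_*\pi_{Y'}^*\alpha$ into $\pi_{Y'}^*(f_*\alpha)$; reassembling yields $\pi_{Y'}^*(f_*\alpha) \bullet g_*\beta = f_*\alpha \times g_*\beta$. For the Gysin identity $(f \times g)^!(\alpha'\times\beta') = f^!\alpha' \times g^!\beta'$, expand $f^!(-) = \theta(f)\bullet(-)$ and analogously for $g$, and use property (i) of a canonical orientation to write $\theta(f \times g) = \theta(f \times \op{id}_Y) \bullet \theta(\op{id}_{X'} \times g)$. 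The niceness of the orientation then rewrites each factor as a pullback of $\theta(f)$ or $\theta(g)$ along the appropriate projection. Commutativity of $\bB$ permits reordering the resulting four-fold product into the form $\pi_{Y'}^*(\theta(f) \bullet \alpha') \bullet (\theta(g) \bullet \beta')$, which is precisely $f^!\alpha' \times g^!\beta'$.

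Part (2) is handled dually. For the pullback compatibility, expand $\alpha \times \beta = p_1^*\alpha \bullet p_2^*\beta$, distribute $(f \times g)^*$ across the product via (B-5), and use functoriality of pullback (B-3) together with the identities $p_1 \circ (f \times g) = f \circ p_1'$ and $p_2 \circ (f \times g) = g \circ p_2'$ to rewrite each factor as $p_1'^*f^*\alpha$ or $p_2'^*g^*\beta$, whose product is $f^*\alpha \times g^*\beta$. For the Gysin covariance, expand $f_!(-) = f_*((-) \bullet \theta(f))$, use (i) and niceness to obtain $\theta(f \times g) = p_1^*\theta(f) \bullet p_2^*\theta(g)$ (with suitable choices of projections), and then combine the projection formula (B-7), (B-4), (B-6), and commutativity to identify $(f \times g)_*\bigl((\alpha' \times \beta') \bullet \theta(f \times g)\bigr)$ with $p_1^*(f_!\alpha') \bullet p_2^*(g_!\beta') = f_!\alpha' \times g_!\beta'$.

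The main obstacle in each case is purely bookkeeping: choosing the order in which to move terms past each other --- in particular when to invoke commutativity to swap two factors whose orientations have been pulled back from different projections --- and checking that every intermediate fiber square to which (B-5) or (B-6) is applied is independent, a verification made routine by the blanket hypothesis on the four small squares. The niceness of $\theta$ is essential for the Gysin statements, since it is precisely what allows $\theta(f \times \op{id})$ and $\theta(\op{id} \times g)$ to be identified with pullbacks of $\theta(f)$ and $\theta(g)$.
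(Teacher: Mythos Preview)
Your strategy is the paper's strategy, and your outlines for the Gysin identity and for part~(2) track the paper closely. But there is a genuine slip in the pushforward argument of part~(1). You claim that (B-4) moves \emph{each} partial pushforward past the product with~$\beta$, and that (B-6) then converts $(\op{id}_X\times g)_*\pi_Y^*\alpha$ into $\pi_{Y'}^*\alpha$. This does not typecheck: $\pi_Y^*\alpha$ lives in $\bB(X\times Y\to Y)$, and since there is no morphism $X\times Y'\to Y$ (nor $X'\times Y'\to Y$), the expression $(\op{id}_X\times g)_*\pi_Y^*\alpha$ is not defined, and likewise (B-4) cannot be applied to the $g$-direction pushforward. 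Axiom (B-4) allows you to commute a confined pushforward on the \emph{source} of the first bivariant factor past the product; the $g$-direction map changes~$Y$, which is the \emph{target} of that first factor, so (B-4) is simply inapplicable there.

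What actually handles the $g$-direction is the projection formula (B-7), and this is precisely the route the paper takes. One writes $\pi_Y^*=g^*\circ q^*$ with $q=\pi_{Y'}$; after the (B-4) and (B-6) steps for the $f$-direction have produced $g^*\bigl(q^*(f_*\alpha)\bigr)\bullet\beta$ inside $\bB(X'\times Y\to pt)$, one applies (B-7) to the independent square with $g$ on the bottom and $\op{id}_{X'}\times g$ on top to obtain
\[
(\op{id}_{X'}\times g)_*\Bigl(g^*\bigl(q^*(f_*\alpha)\bigr)\bullet\beta\Bigr)=q^*(f_*\alpha)\bullet g_*\beta=f_*\alpha\times g_*\beta.
\]
The distinction between (B-4) and (B-7) here is not cosmetic: they move pushforwards past products in structurally different configurations, and only (B-7) fits the $g$-step.
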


\begin {proof} The proof is tedious, using several axioms of the bivariant theory. For the sake of completeness we give a proof.  But, we give a proof for only (1) and a proof for (2) is left for the reader. 

For morphisms $f: X \to X'$ and $g: Y \to Y'$, consider  the above big commutative diagrams.
The proof of $(f \times g)_*(\alp \times \be) = f_*\alp \times g_*\be$ goes as follows:

\begin {align*}
(f \times g)_*(\alp \times \be) & = (f \times g)_*\Bigl ( (qg)^* \alp \bullet \be \Bigr ) \quad \text {(by definition)} \\
& = (f \times g)_*\Bigl ( \left (g^* \left (q^*\alp \right ) \right ) \bullet \be \Bigr )  \\
& = (Id_{X'} \times f)_*\biggl ( \left (f \times Id_Y \right )_* \Bigl (g^* \left (q^*\alp \right ) \bullet \be \Bigr ) \biggr) \\
& = (Id_{X'} \times f)_*\biggl ( \left (f \times Id_Y \right )_* \left (g^* \left (q^*\alp \right ) \right ) \bullet \be \biggr) \quad \text {(by (B-4))} \\
& = (Id_{X'} \times f)_*\biggl ( g^* \Bigl ( \left (f \times Id_{Y'} \right )_*\left (q^*\alp \right ) \Bigr  ) \biggr ) \bullet \be \quad \text {(by (B-6))} \\
& = (Id_{X'} \times f)_*\Bigl ( g^* \left (q^* \left (f_*\alp \right ) \right ) \bullet \be \Bigr) \quad \text {(by (B-6))} \\
& = q^* \left (f_*\alp \right ) \bullet g_* \be \quad \text {(by (B-7))} \\
& = f_*\alp \times g_* \be \quad \text {(by definition)} 
\end{align*}

Next we show $(f \times g)^!(\alp' \times \be') = f^!\alp' \times g^!\be'.$
For this, first we observe that 
$$(f \times g)^! := (\theta (f)  \times \theta (g) ) \bullet .$$
On one hand we have that 
\begin {align*}
(f \times g)^! (\alp' \times \be') & = (\theta (f)  \times \theta (g) ) \bullet (q^*\alp' \bullet \be') \quad \text {(by definition)} \\
& = (\theta (f)  \times \theta (g) ) \bullet q^*\alp' \bullet \be' \\
& = \Bigl (Id_{X'} \times f)^* (Id_{Y'} \times q)^* \Bigr) (\theta (f) ) \bullet (p \times Id_{Y'})^*(\theta (g) ) \bullet q^*\alp' \bullet \be' \
\end{align*}

On the other hand we have that  
\begin {align*}
f^!\alp'  \times g^!\be' & = (\theta (f)  \bullet \alp') \times (\theta (g)  \bullet \be') \quad \text {(by definition)} \\
& = \Bigl ((qg)^* (\theta (f) \bullet \alp') \Bigr ) \bullet (\theta (g) \bullet \be') \\
& = \biggl ( g^* \Bigl (q^* (\theta (f) \bullet \alp') \Bigr ) \biggr ) \bullet (\theta (g)  \bullet \be') \\
& = g^* \Bigl ( (Id_{Y'} \times q)^*\alp_f \bullet q^*\alp' \Bigr ) \bullet \theta (g)  \bullet \be'\\
& = \Bigl (Id_{X'} \times f)^* (Id_{Y'} \times q)^* \Bigr) (\theta (f) ) \bullet g^*(q^*\alp') \bullet \theta (g)  
\bullet \be' \\
& = \Bigl (Id_{X'} \times f)^* (Id_{Y'} \times q)^* \Bigr) (\theta (f) ) \bullet (p \times Id_{Y'})^*(\theta (g) ) \bullet q^*\alp' \bullet \be' \
\end{align*}

The last equality follows from the commutativity of the bivariant theory.
Thus we get the above equality. 
\end {proof}

Here we remark the following fact about the covariant and contravariant functors $\bB_*$ and $\bB^*$, which will be needed in later sections. They are \emph {almost} what Levine and Morel call \emph{Borel--Moore functor with products} in \cite{LM-book} (see also \cite{Merkurjev}); namely they do not necessarily have the \emph {additivity property}, which is explained below after the proposition.

\begin{pro} Let the situation be as above.

(1-i) for confined morphisms $f:X \to Y$, the pushforward homomorphisms
$$f_*: \bB_*(X) \to \bB_*(Y)$$
 are covariantly functorial,

(1-ii) for morphisms in $\Cal S$, i.e., for nice canonical $\bB$-orientable morphisms
$f:X \to Y$, the Gysin (pullback) homomorphisms 
$$f^!: \bB_*(Y) \to \bB_*(X)$$
are contravariantly functorial,

(1-iii) for an independent square
$$
\CD
X' @> g' >> X \\
@V f' VV @VV f V\\
Y' @>> g > Y 
 \endCD
$$
with $g \in \Cal C$ and $f \in \Cal S$, the following diagram commutes:
$$
\CD
\bB_*(Y') @> {f'}^! >> \bB_*(X') \\
@V g_* VV @VV {g'}_* V\\
\bB_*(Y) @>> f^! > \bB_*(X), 
 \endCD
$$

(1-iv) the pushforward homomorphisms
$f_*: \bB_*(X) \to \bB_*(Y)$ for confined morphisms and the Gysin (pullback) homomorphisms $f^!: \bB_*(Y) \to \bB_*(X)$ for  morphisms in $\Cal S$ are both compatible with the exterior products
$$ \times :\bB_*(X) \otimes \bB_*(Y) \to \bB_*(X \times Y).$$ 
 
(2-i) for any morphisms $f:X \to Y$, the pullback homomorphisms
$$f^*: \bB^*(Y) \to \bB^*(Y)$$
 are contravariantly functorial,

(2-ii) for confined and specialized morphisms in $\Cal C \cap \Cal S$, i.e., for confined and nice canonical $\bB$-orientable morphisms
$f:X \to Y$, the Gysin (pushforward) homomorphisms 
$$f_!: \bB^*(X) \to \bB^*(Y)$$
are covariantly functorial,

(2-iii) for an independent square
$$
\CD
X' @> g' >> X \\
@V f' VV @VV f V\\
Y' @>> g > Y 
 \endCD
$$
with $g \in \Cal C \cap \Cal S$, the following diagram commutes:
$$
\CD
\bB^*(Y') @> {f'}^*>> \bB^*(X') \\
@V g_! VV @VV {g'}_! V\\
\bB^*(Y) @>> f^! > \bB^*(X), 
 \endCD
$$

(2-iv) the pullback homomorphisms
$f^*: \bB^*(Y) \to \bB^*(Y)$ for any morphisms $f:X \to Y$ and the Gysin (pushforward) homomorphisms $f_!: \bB^*(X) \to \bB^*(Y)$ for  confined and specialized morphisms in $\Cal C \cap \Cal S$ are both compatible with the exterior products
$$ \times :\bB^*(X) \otimes \bB^*(Y) \to \bB^*(X \times Y).$$\
\end{pro}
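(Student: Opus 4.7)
The plan is to verify each of the eight claims by unfolding the definitions of the induced Gysin and functorial maps in terms of the underlying bivariant operations, and then invoking the appropriate axioms among (B-1)--(B-7) together with, where relevant, the nice-orientation identity $\theta(f') = g^*\theta(f)$. Parts (1-i) and (2-i) are the direct translations of (B-2) and (B-3) to the covariant slot $\bB_*(X) = \bB(X \to pt)$ and the contravariant slot $\bB^*(X) = \bB(X \xrightarrow{\op{id}_X} X)$, respectively.

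For (1-ii), given composable $f, g \in \Cal S$ and $\alpha \in \bB_*(Z)$, I would compute
\[
(gf)^!\alpha = \theta(gf) \bullet \alpha = \bigl(\theta(f) \bullet \theta(g)\bigr) \bullet \alpha = \theta(f) \bullet \bigl(\theta(g) \bullet \alpha\bigr) = f^!(g^!\alpha),
\]
using Definition~\ref{canonical}(i) and the associativity axiom (B-1). For (2-ii), the dual computation starts from $g_! f_! \alpha = g_*\bigl(f_*(\alpha \bullet \theta(f)) \bullet \theta(g)\bigr)$; one then uses (B-4) to move $f_*$ outside the product with $\theta(g)$, (B-2) to compose the two pushforwards into $(gf)_*$, and Definition~\ref{canonical}(i) to collapse $\theta(f) \bullet \theta(g) = \theta(gf)$, producing $(gf)_!\alpha$.

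For (1-iii), the projection formula (B-7) combined with the nice-orientation identity $\theta(f') = g^*\theta(f)$ gives, for $\alpha \in \bB_*(Y')$,
\[
g'_*(f'^!\alpha) = g'_*\bigl(g^*\theta(f) \bullet \alpha\bigr) = \theta(f) \bullet g_*\alpha = f^!(g_*\alpha).
\]
For (2-iii) (interpreting the bottom arrow as $f^*$, as required for the composition to be defined when $f$ is not in $\Cal S$), the plan is to commute the pullback past the pushforward $g_*$ via (B-6), then distribute the pullback over the product $\alpha \bullet \theta(g)$ via (B-5), and finally apply the nice-orientation identity $\theta(g') = f^*\theta(g)$, chaining to
\[
f^*(g_!\alpha) = f^*\bigl(g_*(\alpha \bullet \theta(g))\bigr) = g'_*\bigl(f'^*\alpha \bullet \theta(g')\bigr) = g'_!(f'^*\alpha).
\]
Finally, (1-iv) and (2-iv) are immediate restrictions of the preceding Proposition to the covariant and contravariant slots $\bB_*$ and $\bB^*$.

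The main obstacle is (2-iii): one must correctly assemble the three-row stacked independent diagram whose bottom square is the given square and whose top square is obtained by pulling $\op{id}_{Y'}$ back along $f' : X' \to Y'$ (independent by axiom (ii) of Section~\ref{FM-BT}), and then verify that (B-5) splits $f^*(\alpha \bullet \theta(g))$ as $f'^*\alpha \bullet f^*\theta(g)$, with the pullback on $\alpha$ landing on $f'$ rather than on $g'$. Once this bookkeeping is set up correctly, the remainder is a routine chaining of axioms.
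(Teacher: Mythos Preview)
The paper states this proposition without proof; it is followed immediately by a remark on additivity, with no proof environment in between. Your argument is therefore not being compared against anything, but it is essentially correct: each item reduces to the indicated axiom or combination of axioms, and your handling of (1-iii) via (B-7) and of (2-iii) via (B-6), (B-5), and the nice-orientation identity (after correcting the evident typo $f^! \leadsto f^*$ in the bottom arrow) is the standard verification.
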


\begin{rem} As mentioned above, what Levine and Morel call {\it Borel--Moore functor with products} requires the following additivity property:e.g., if $H_*$ is the usual homology theory, for the disjoint union $X \coprod Y$ of spaces
$$H_*(X \coprod Y) = H_*(X) \oplus H_*(Y).$$
If we want a bivariant theory to have such an additivity property, we need more requirements on the category. We assume that

(1) our category is closed under taking the coproduct  $\coprod$, 

(2) the morphisms $i_X:X \to X \coprod Y$ and $i_Y: Y \to X \coprod Y$ are confined and nice canonical $\bB$-orientable and strongly orientable in the sense of Fulton--MacPherson \cite[2.6 Orientations]{Fulton-MacPherson}, 

(3) $f: X \coprod Y \to Z$ is confined if and only if $f|_X := f\circ i_X :X \to Z$ and $f|_Y := f\circ i_Y :Y \to Z$ are confined,

(4) our category satisfies that any fiber square with $f \in \Cal C$ 
$$\CD
P' @>  >> P\\
@V f' VV @VV f V\\
Q' @>> > Q \endCD
$$
is \emph {independent}. This assumption shall be provisionally called \emph {``$\Cal C$-independence"}.

Under these assumpitons, the additivity property for our bivariant theory means that the following homomorphism is an isomorphism:
$${i_X}_* \oplus {i_Y}_*: \bB(X   \xrightarrow {f|_X} Z) \oplus  \bB(Y   \xrightarrow {f|_Y} Z)  \xrightarrow {\cong } \bB(X \coprod Y  \xrightarrow {f} Z).$$
The special cases imply the following additivity formulas:for the coproduct  $X \coprod Y$, 
$$\bB_*(X \coprod Y) \cong  \bB_*(X) \oplus \bB_*(Y),$$
$$\bB^*(X \coprod Y) \cong  \bB^*(X) \oplus \bB^*(Y).$$
This additivity property is not so important, but when we need this additivity property and want to emphasize it, we call such a bivariant theory an {\it additive bivariant theory}. 
\end{rem}

\section{A Universal Bivariant Theory}

The following theorem is about {\it the existence of the universal bivariant theory} in a class of bivariant theories defined on a category $\Cal V$ which is equipped with a class $\Cal C$ of confined morphisms, a class of independent squares and a class $\Cal S$ of specialized morphisms. 

\begin{thm}\label{UBT} Let  $\Cal V$ be a category equipped with a class $\Cal C$ of confined morphisms, a class of independent squares and a class $\Cal S$ of specialized maps.  We define 
$$\bM^{\Cal C} _{\Cal S}(X  \xrightarrow{f}  Y)$$
to be the free abelian group generated by the set of isomorphism classes of confined morphisms $h: W \to X$  such that the composite of  $h$ and $f$ is a specialized map:
$$h \in \Cal C \quad \text {and} \quad f \circ h: W \to Y \in \Cal S.$$

(1) The assignment $\bM^{\Cal C} _{\Cal S}$ is a bivariant theory if the three operations are defined as follows:

{\bf Product operations}: For morphisms $f: X \to Y$ and $g: Y
\to Z$, the product operation
$$\bullet: \bM^{\Cal C} _{\Cal S} ( X  \xrightarrow{f}  Y) \otimes \bM^{\Cal C} _{\Cal S} ( Y  \xrightarrow{g}  Z) \to
\bM^{\Cal C} _{\Cal S} ( X  \xrightarrow{gf}  Z)$$
is  defined by
$$\left (\sum_V m_V[V \xrightarrow{h_V}  X] \right ) \bullet \left (\sum_W n_W[W  \xrightarrow{k_W}  Y] \right ) := \sum_{V, W} m_V n_W [V'  \xrightarrow{h_V \circ k_W''}  X], $$
where we consider the following fiber squares
$$\CD
V' @> {h_V'} >> X' @> {f'} >> W \\
@V {k_W''}VV @V {k_W'}VV @V {k_W}VV\\
V@>> {h_V} > X @>> {f} > Y @>> {g} > Z .\endCD
$$

{\bf Pushforward operations}: For morphisms $f: X \to Y$
and $g: Y \to Z$ with $f$ confined, the pushforward operation
$$f_*: \bM^{\Cal C} _{\Cal S} ( X  \xrightarrow{gf} Z) \to \bM^{\Cal C} _{\Cal S} ( Y  \xrightarrow{g}  Z) $$
is  defined by
$$f_*\left (\sum_Vn_V[V \xrightarrow{h_V}  X] \right) := \sum _Vn_V[V  \xrightarrow{f \circ h_V}  Y].$$

{\bf Pullback operations}: For an independent square
$$\CD
X' @> g' >> X \\
@V f' VV @VV f V\\
Y' @>> g > Y, \endCD
$$
the pullback operation
$$g^* : \bM^{\Cal C} _{\Cal S} ( X  \xrightarrow{f} Y) \to \bM^{\Cal C} _{\Cal S}( X'  \xrightarrow{f'} Y') $$
is  defined by
$$g^*\left (\sum_V n_V[V  \xrightarrow{h_V}  X] \right):=  \sum_V n_V[V'  \xrightarrow{h_V'}  X'],$$
where we consider the following fiber squares:
$$\CD
V' @> g'' >> V \\
@V {h_V'} VV @VV {h_V}V\\
X' @> g' >> X \\
@V f' VV @VV f V\\
Y' @>> g > Y. \endCD
$$

(2)  Let $\Cal {BT}$ be a class of bivariant theories $\bB$ on the same category $\Cal V$ with a class $\Cal C$ of confined morphisms, a class of independent squares and a class $\Cal S$ of specialized maps. We also assume that our category satisfies the ``$\Cal C$-independence"  defined in the previous section. Let $\Cal S$ be nice canonically $\bB$-oriented for any bivariant theory $\bB \in \Cal {BT}$. Then, for each bivariant theory $\bB \in \Cal {BT}$ there exists a unique Grothendieck transformation
$$\ga_{\bB} : \bM^{\Cal C} _{\Cal S} \to \bB$$
such that for a specialized morphism $f: X \to Y \in \Cal S$ the homomorphism
$\ga_{\bB} : \bM^{\Cal C} _{\Cal S}(X  \xrightarrow{f}  Y) \to \bB(X  \xrightarrow{f}  Y)$
satisfies the normalization condition that $$\ga_{\bB}([X  \xrightarrow{\op {id}_X}  X]) = \theta_{\bB}(f).$$
\end{thm}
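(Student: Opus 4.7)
The proof splits into verification of the bivariant axioms (part~(1)) and construction plus uniqueness of $\ga_{\bB}$ (part~(2)).

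For part~(1), the first task is to see that the three operations produce elements of the correct abelian group. For the product $[V \xrightarrow{h_V} X] \bullet [W \xrightarrow{k_W} Y]$, the new morphism $h_V \circ k_W'' : V' \to X$ is confined because $\Cal C$ is closed under composition and base change, while
$$gf \circ h_V \circ k_W'' = (g \circ k_W) \circ (f' \circ h_V')$$
lies in $\Cal S$, since $f' \circ h_V'$ is the base change of $f \circ h_V \in \Cal S$ along $k_W$ and $\Cal S$ is closed under composition and base change. Well-definedness of pushforward and pullback is immediate. Once well-definedness is in hand, each of the seven axioms (B-1)--(B-7) follows from the universal property of the fiber square: both sides of each axiom, after unwinding, produce the same generator $[U \to X]$ up to canonical isomorphism, because the iterated fiber squares involved are determined up to unique isomorphism by their cospans. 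The unit is $[X \xrightarrow{\op{id}_X} X]$, and its required properties are trivial.

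For part~(2), the crucial observation is the factorization
$$[V \xrightarrow{h_V} X] = (h_V)_* \bigl[ V \xrightarrow{\op{id}_V} V \bigr]$$
inside $\bM^{\Cal C}_{\Cal S}(X \xrightarrow{f} Y)$, where the right-hand class lies in $\bM^{\Cal C}_{\Cal S}(V \xrightarrow{f \circ h_V} Y)$ because $f \circ h_V \in \Cal S$. Any Grothendieck transformation $\ga_{\bB}$ satisfying the normalization must therefore send this generator to
$$\ga_{\bB}\bigl([V \xrightarrow{h_V} X]\bigr) = (h_V)_*\, \theta_{\bB}(f \circ h_V),$$
which yields uniqueness. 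Take this equation as the definition, extend $\bZ$-linearly, and verify compatibility with the three bivariant operations. Compatibility with pushforward is a one-line consequence of the functoriality axiom (B-2). Compatibility with pullback combines (B-6) applied to the stacked outer independent rectangle with the nice canonical orientation identity $\theta_{\bB}(f' \circ h_V') = g^* \theta_{\bB}(f \circ h_V)$.

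Compatibility with product is the substantive step and the main obstacle. Setting $\alp = [V \xrightarrow{h_V} X]$ and $\be = [W \xrightarrow{k_W} Y]$, one starts from
$$\ga_{\bB}(\alp) \bullet \ga_{\bB}(\be) = (h_V)_* \theta_{\bB}(f \circ h_V) \bullet (k_W)_* \theta_{\bB}(g \circ k_W).$$
I would use (B-4) to extract $(h_V)_*$, then invoke the projection formula (B-7) together with (B-6) to rewrite $\theta_{\bB}(f\circ h_V) \bullet (k_W)_* \theta_{\bB}(g\circ k_W)$ as a pushforward along $k_W''$ of a product involving the pullback of $\theta_{\bB}(f \circ h_V)$ along $k_W'$; the $\Cal C$-independence hypothesis is precisely what guarantees that every square appearing here is independent, since all relevant horizontal maps are base changes of the confined $k_W$. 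The nice orientation identity converts the pulled-back class into $\theta_{\bB}(f' \circ h_V')$, and Definition~\ref{canonical}(i) assembles the two orientation classes into $\theta_{\bB}((gf) \circ h_V \circ k_W'')$, producing $(h_V \circ k_W'')_* \theta_{\bB}((gf) \circ h_V \circ k_W'')$, which is exactly $\ga_{\bB}(\alp \bullet \be)$. All three nontrivial bivariant axioms interlock with the orientation conditions here, and the principal difficulty is the bookkeeping of which independent square supplies each pullback; everything else in the proof is formal.
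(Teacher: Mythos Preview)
Your proposal is correct and follows essentially the same route as the paper: the well-definedness of the product via the rewriting $(gf)\circ(h_V\circ k_W'') = (g\circ k_W)\circ(f'\circ h_V')$, the uniqueness argument from the factorization $[V\xrightarrow{h_V}X]=(h_V)_*[V\xrightarrow{\op{id}_V}V]$, and the verification of product compatibility using (B-4), (B-7), the nice-orientation identity, and $\Cal C$-independence all match the paper's argument (the paper runs the product computation starting from $\ga_{\bB}(\alp\bullet\be)$ rather than $\ga_{\bB}(\alp)\bullet\ga_{\bB}(\be)$, but the steps are the same in reverse). One point you should add: the paper explicitly checks that the formula $\ga_{\bB}([V\xrightarrow{h_V}X])=(h_V)_*\theta_{\bB}(f\circ h_V)$ is well-defined on isomorphism classes, using $\Cal C$-independence and (B-6) to compare two representatives; you skip this, and it is not entirely automatic.
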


\begin{rem} (1) By the definition of $\bM^{\Cal C}_{\Cal S} $, the class $\Cal S$ is nice canonically $\bM^{\Cal C}_{\Cal S}$-oriented with the canonical orientation $\theta_{\bM^{\Cal C}_{\Cal S} } (X  \xrightarrow{f}  Y) := [X  \xrightarrow{id_X}  X]$ for $f \in \Cal S$. 

(2) The product operation $\bullet: \bM^{\Cal C} _{\Cal S} ( X  \xrightarrow{f}  Y) \otimes \bM^{\Cal C} _{\Cal S} ( Y  \xrightarrow{g}  Z) \to
\bM^{\Cal C} _{\Cal S} ( X  \xrightarrow{gf}  Z)$ can also be interpreted as follows. The free abelian group $\Cal M(X)$ generated by the set of isomorphism classes of confined morphisms $h_V:V \to X$ is a commutative ring by the fiber product
$$[V_1  \xrightarrow{h_1}  X]\cup [V_2  \xrightarrow{h_
2}  X]:= [V_1 \times_X V_2  \xrightarrow{h_1 \times _X h_2}  X].$$
For a confined morphism $f:X \to Y$ we have the pushforward homomorphism $f_*:\Cal M(X) \to \Cal M(Y)$ and for \emph{any} morphism $f:X \to Y$ we have the pullback homomorphism $f^*:\Cal M(Y) \to \Cal M(X)$. Then the product operation is nothing but 
$$[V  \xrightarrow{h_V}  X] \bullet [W  \xrightarrow{h_W}  Y] = [V  \xrightarrow{h_V}  X] \cup f^*( [W  \xrightarrow{h_W}  Y]). $$
But in our case we need to chase the morphisms involved, so we just stick to this presentation.
\end{rem}

Now we go on to the proof of Theorem \ref{UBT}:
\begin{proof} For (1), we have to show that the three bivariant
operations are well-defined, but we show only the well-definedness of the bivariant product and the other two are clear. To show that these three operations satisfy the seven axioms (B-1) --- (B-7) is left for the reader. 

Let $[V  \xrightarrow{h_V} X] \in \bM^{\Cal C}_{\Cal S} ( X  \xrightarrow{f} Y)$ and $[W  \xrightarrow{k_W} Y] \in \bM^{\Cal C}_{\Cal S} ( Y  \xrightarrow{g} Z)$; thus $h_V:V \to X$ is confined and the composite $f \circ h_V:V \to Y$ is in $\Cal S$, and also  $k_W:W \to Y$ is confined and the composite $g \circ k_W:W \to Z$ is in $\Cal S$. By definition we have
$$[V  \xrightarrow{h_V} X] \bullet [W  \xrightarrow{k_W} Y] 
= [V'  \xrightarrow{h_V \circ k_W''} X].$$
We want to show that $[V'  \xrightarrow{h_V \circ k_W''} X] \in \bM^{\Cal C}_{\Cal S} ( X  \xrightarrow{g \circ f} Z)$, i.e., 
$$ (g \circ f ) \circ \left (h_V \circ k_W'' \right) \in \Cal S.$$
From the fiber squares given in {\bf Product operations} above, we have 
$$(g \circ f ) \circ \left (h_V \circ k_W'' \right) =
(g \circ k_W) \circ  \left (f' \circ h_V' \right ).$$
$f' \circ h_V'$ is in $\Cal S$, because it is the pullback of $f \circ h_V$ and $f \circ h_V$ is in $\Cal S$ and $\Cal S$ is closed under base change by hypothesis. $g \circ k_W$ is in $\Cal S$ by hypothesis. Thus the composite $(g \circ k_W) \circ  \left (f' \circ h_V' \right )$ is also in $\Cal S$. Thus the bivariant product is well-defined. \\

For (2), first we show the uniqueness. Suppose that there exists a Grothendieck transformation 
$$\ga : \bM^{\Cal C} _{\Cal S}(X  \xrightarrow{f}  Y) \to \bB(X  \xrightarrow{f}  Y)$$
such that for any $f: X \to Y \in \Cal S$ the homomorphism
$\ga : \bM^{\Cal C} _{\Cal S}(X  \xrightarrow{f}  Y) \to \bB(X  \xrightarrow{f}  Y)$
satisfies that $\ga([X  \xrightarrow{\op {id}_X}  X]) = \theta_{\bB}(f).$ Note that for any $f: X \to Y \in \Cal S$, $[X  \xrightarrow{\op {id}_X}  X] \in \bM^{\Cal C} _{\Cal S}(X  \xrightarrow{f}  Y) $ is a nice canonical orientation, i.e., $\theta_{\bM^{\Cal C} _{\Cal S}}(f) = [X  \xrightarrow{\op {id}_X}  X].$

Let $h_V:V \to X$ be a confined map such that $f \circ h_V: V \to Y$ is in $\Cal S$. We have that $[V  \xrightarrow{h_V} X]= {h_V}_*[V  \xrightarrow{\op {id}_V} V]$, where $[V  \xrightarrow{\op {id}_V} V] \in \bM^{\Cal C} _{\Cal S}(V  \xrightarrow{f \circ h_V}  Y)$. Since $f \circ h_V \in \Cal S$ by hypothesis, it follows from the normalization that we get 
\begin{align*}
\ga([V  \xrightarrow{h_V} X]) & = \ga ({h_V}_*[V  \xrightarrow{\op {id}_V} V]) \\
&={h_V}_*\ga([V  \xrightarrow{\op {id}_V} V]) \\
& ={h_V}_*\theta_{\bB}(f \circ h_V).\
\end{align*}
Thus it is uniquely determined. 

The rest is to show that the assignment 
$$\ga : \bM^{\Cal C} _{\Cal S}(X  \xrightarrow{f}  Y) \to \bB(X  \xrightarrow{f}  Y)$$
defined by $\ga_{\bB}([V  \xrightarrow{h_V} X]) = {h_V}_*\theta_{\bB}(f \circ h_V)$ is well-defined and it is also a Grothendieck transformation, i.e., that it preserves the three bivariant operations.

(i) \underline {the well-definedness of the above assignment $\ga$}: namely, it does not depend on the choice of $h_V: V \to X$. So, let us choose another one $h_{V'} : V' \to X$, i.e., we have the following commutative diagram:
$$\CD
V' @>  {\cong }>> V\\
@V {h_{V'}} VV @VV {h_V} V\\
X@>> {id_X} > X. \endCD
$$
Since $h_V \in \Cal C$ and the diagram is a fiber square, it follows from the $\Cal C$-independence assumption that it is independent. Therefore the outer square of the following diagram is independent since the lower square is independent by hypothesis:
$$\CD
V' @>  {\cong }>> V\\
@V {h_{V'}} VV @VV {h_V} V\\
X@>> {id_X} > X \\
@V f VV @VV f V\\
Y'@>> {id_Y} > Y. \endCD
$$
Since $f \circ h_V \in \Cal S$, the outersquare is independent and $\Cal S$ is nice canonically $\bB$-oriented, we have
$$\theta (f \circ h_{V'}) = id_Y^* \theta (f \circ h_{V}).$$
Hence  
\begin {align*}
{h_{V'}}_* \theta (f \circ h_{V'})  & = {h_{V'}}_* (id_Y^* \theta (f \circ h_{V})) \\
& = id_Y^* ({h_{V}}_*\theta (f \circ h_{V}))  \qquad \text {(by (B-6) )}\\
 & ={h_{V}}_*\theta (f \circ h_{V}).
\end{align*}
Thus it does not depend on the choice of $h_V: V \to X$.

(ii) \underline {it preserves the product operation}: Letting the situation be as in (1), it suffices to show that
$$\ga_{\bB} \left ([V  \xrightarrow{h_V} X]\bullet [W  \xrightarrow{k_W} Y] \right )  = \ga_{\bB}([V  \xrightarrow{h_V} X]) \bullet \ga_{\bB}([W  \xrightarrow{k_W} Y]).
$$
Using the fiber squares given in {\bf  Product operations}, we have
\begin {align*}
& \ga_{\bB} \left ([V  \xrightarrow{h_V} X]\bullet [W  \xrightarrow{k_W} Y] \right ) \\
 & = \ga_{\bB}([V'  \xrightarrow{h_V \circ k_W''} X]) \quad \text {(by the definition)} \\
& = \left (h_V \circ {k_W''} \right) _* \theta_{\bB}(g \circ f \circ h_V \circ k_W'') \quad \text {(by the definition)} \\
& = {h_V}_*{k_W''}_* \theta_{\bB}(g \circ k_W \circ f' \circ h_V') \\
& = {h_V}_*{k_W''}_* \left ( \theta_{\bB}(f' \circ h_V') \bullet \theta_{\bB}(g \circ k_W) \right ) 
\end{align*}
Here we need the assumption of \emph {$\Cal C$-independence}.   In the fiber squares 
$$\CD
V' @> {h_V'} >> X' @> {f'} >> W \\
@V {k_W''}VV @V {k_W'}VV @V {k_W}VV\\
V@>> {h_V} > X @>> {f} > Y  \endCD
$$
$k_W: W \to Y$ is confined by the definition, hence the outer square is independent by this $\Cal C$-independence assumption. Therefore, since $f \circ h_V : V \to Y$ is in $\Cal S$, the above equality continues as follows:
\begin{align*}
& = {h_V}_*{k_W''}_*\left ({k_W}^{\bigstar} \theta_{\bB}((f \circ h_V) )\bullet \theta_{\bB}(g \circ k_W) \right ) \\
& = {h_V}_*\left (\theta_{\bB}(f \circ h_V) \bullet {k_W}_*\theta_{\bB}(g \circ k_W) \right ) \quad \text {(by (B-7) projection formula)} \\
& = {h_V}_*\theta_{\bB}(f \circ h_V) \bullet {k_W}_*\theta_{\bB}(g \circ k_W)  \quad \text {(by (B-4))}\\
& = \ga_{\bB}([V  \xrightarrow{h_V} X]) \bullet \ga_{\bB}([W  \xrightarrow{k_W} Y]). \
\end{align*}

(iii) \underline {it preserves the pushforward operation}: Consider $X  \xrightarrow{f} Y \xrightarrow{g} Z$ and a confined morphsim $h_V:V \to X$ such that the composite $g \circ f \circ h_V: V \to Y$ is in $\Cal S$.

\begin{align*}
\ga_{\bB}(f_*[V  \xrightarrow{h_V} X]) &= \ga_{\bB}([V  \xrightarrow{f \circ h_V} Y]) \\
& = (f \circ h_V)_{\bigstar}\theta_{\bB}(g \circ (f \circ h_V)) \\
& = f_* {h_V}_{\bigstar}\theta_{\bB}((g \circ f) \circ h_V) \\
& = f_* \ga_{\bB}([V  \xrightarrow{h_V} X])\
\end{align*}

(iv) \underline {it preserves the pullback operation}: Consider a confined morphsim $h_V:V \to X$ such that the composite $f \circ h_V: V \to Y$ is in $\Cal S$ and the fiber squares given in {\bf Pullback operations} above, we have

\begin{align*}
\ga_{\bB}(g^*[V  \xrightarrow{h_V} X]) &= \ga_{\bB}([V'  \xrightarrow{h_V'} X']) \\
& = {h_V'}_*\theta_{\bB}(f' \circ h_V') \\
& = {h_V'}_*g^*\theta_{\bB}(f \circ h_V) \\
& = g^*{h_V}_*\theta_{\bB}(f \circ h_V) \quad \text {(by (B-6))}\\
& = g^*\ga_{\bB}([V  \xrightarrow{h_V} X]).\
\end{align*}
This completes the proof of the theorem.
\end{proof}

Let $\Cal S$ be a class of specialized morphisms as above and let $\Cal S$ be canonically $\bB$-oriented for a bivariant theory $\bB$. If $\pi_X:X\to pt$ is in $\Cal S$, in which case we sometimes say, abusing words, that {\it $X$ is specialized}, then we have the Gysin homomorphism
$$ {\pi_X}^!: \bB_*(pt) \to \bB_*(X)$$
which, we recall, is defined to be
$${\pi_X}^!(\alp) = \theta_{\bB}(\pi_X) \bullet \alp.$$
In particular, if we let $1_{pt} \in \bB(pt)$ be the unit, then we have
$${\pi_X}^!(1_{pt}) = \theta_{\bB}(\pi_X) \bullet 1_{pt} = \theta_{\bB}(\pi_X) .$$
This element ${\pi_X}^!(1_{pt})  = \theta_{\bB}(\pi_X)$ is called {\it the fundamental ``class" of $X$ associated to the bivariant theory $\bB$} (cf. \cite{LM-book}, \cite{Merkurjev}), denoted by $[X]_{\bB}$.

\begin{cor} Let $\Cal {BT}$ be a class of additive bivariant theories $\bB$ on the same category $\Cal V$ with a class $\Cal C$ of confined morphisms, a class of independent squares  and a class $\Cal S$ of specialized maps. Let $\Cal S$ be nice canonically $\bB$-oriented for any bivariant theory $\bB \in \Cal {BT}$. We also assume that our category satisfies the ``$\Cal C$-independence". Then, for each bivariant theory $\bB \in \Cal {BT}$,

(1) there exists a unique natural transformation 
$${\gamma_{\bB}}_* : {\bM^{\Cal C} _{\Cal S}}_* \to \bB_*$$
such that if $\pi_X:X \to pt$ is in $\Cal S$ the homomorphism ${\gamma_{\bB}}_* : {\bM^{\Cal C} _{\Cal S}}_*(X) \to \bB_*(X)$ satisfies that
$${\gamma_{\bB}}_*[X  \xrightarrow{\op {id}_X} X] = {\pi_X}^!(1_{pt}) = [X]_{\bB},$$
and

(2) there exists a unique natural transformation 
$${\gamma_{\bB}}^* : {\bM^{\Cal C} _{\Cal S}}^* \to \bB^*$$
such that for any $X$ the homomorphism ${\gamma_{\bB}}^* : {\bM^{\Cal C} _{\Cal S}}^*(X) \to \bB^*(X)$ satisfies that
$${\gamma_{\bB}}^*[X  \xrightarrow{\op {id}_X} X] = 1_X \in \bB^*(X).$$
\end{cor}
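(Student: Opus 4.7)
My plan is to obtain both ${\gamma_{\bB}}_*$ and ${\gamma_{\bB}}^*$ as restrictions of the universal Grothendieck transformation $\gamma_{\bB}: \bM^{\Cal C}_{\Cal S} \to \bB$ supplied by Theorem~\ref{UBT}; naturality and normalization will then be immediate from the Grothendieck-transformation axioms, and uniqueness will follow from the fact that every generator can be produced from an identity generator by a single basic operation.

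For part (1), I would define ${\gamma_{\bB}}_*(X) := \gamma_{\bB}: \bM^{\Cal C}_{\Cal S}(X \to pt) \to \bB(X \to pt)$. When $\pi_X: X \to pt$ lies in $\Cal S$, the class $[X \xrightarrow{\op{id}_X} X]$ is a bona fide generator of ${\bM^{\Cal C}_{\Cal S}}_*(X)$, and the normalization clause of Theorem~\ref{UBT} applied with $f = \pi_X$ yields $\gamma_{\bB}([X \xrightarrow{\op{id}_X} X]) = \theta_{\bB}(\pi_X) = {\pi_X}^!(1_{pt}) = [X]_{\bB}$. Covariant functoriality with respect to confined pushforwards follows from $\gamma_{\bB}(f_*\alpha) = f_*\gamma_{\bB}(\alpha)$. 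For uniqueness, note that any generator $[V \xrightarrow{h_V} X]$ of ${\bM^{\Cal C}_{\Cal S}}_*(X)$ satisfies $\pi_X \circ h_V = \pi_V \in \Cal S$ (so the normalization applies on $V$), and $[V \xrightarrow{h_V} X] = {h_V}_*[V \xrightarrow{\op{id}_V} V]$; naturality therefore forces ${\gamma_{\bB}}_*([V \xrightarrow{h_V} X]) = {h_V}_*[V]_{\bB}$, fixing the transformation completely.

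For part (2), I would define ${\gamma_{\bB}}^*(X) := \gamma_{\bB}: \bM^{\Cal C}_{\Cal S}(X \xrightarrow{\op{id}_X} X) \to \bB(X \xrightarrow{\op{id}_X} X)$. Since $\op{id}_X \in \Cal S$ by hypothesis, the element $[X \xrightarrow{\op{id}_X} X]$ is a valid generator, and the normalization clause of Theorem~\ref{UBT} together with clause (ii) of Definition~\ref{canonical} yields $\gamma_{\bB}([X \xrightarrow{\op{id}_X} X]) = \theta_{\bB}(\op{id}_X) = 1_X$. Contravariance with respect to arbitrary morphisms is automatic from $\gamma_{\bB}(g^*\alpha) = g^*\gamma_{\bB}(\alpha)$. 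For uniqueness, a generator $[W \xrightarrow{h_W} X]$ of ${\bM^{\Cal C}_{\Cal S}}^*(X)$ satisfies $h_W \in \Cal C \cap \Cal S$; using the identification $\theta_{\bM^{\Cal C}_{\Cal S}}(h_W) = [W \xrightarrow{\op{id}_W} W]$ from the preceding remark together with the definition $(h_W)_!(\alpha) = (h_W)_*(\alpha \bullet \theta(h_W))$ of the Gysin pushforward, a short computation gives $[W \xrightarrow{h_W} X] = (h_W)_![W \xrightarrow{\op{id}_W} W]$. Covariance of ${\gamma_{\bB}}^*$ along $\Cal C \cap \Cal S$, which is a formal consequence of $\gamma_{\bB}$ being a Grothendieck transformation, then pins down the value on every generator.

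The one point requiring care, rather than a serious obstacle, is checking that the relevant identity classes actually lie in the correct bivariant groups of $\bM^{\Cal C}_{\Cal S}$: for (1) this needs $\pi_X \in \Cal S$ (exactly the hypothesis invoked in the normalization), and for (2) it needs $\op{id}_X \in \Cal S$ (which holds because $\Cal S$ contains all identity maps). Once these bookkeeping points are dispatched, the whole statement reduces to unpacking Theorem~\ref{UBT} along the two families of morphisms $X \to pt$ and $X \xrightarrow{\op{id}_X} X$.
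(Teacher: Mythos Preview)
Your proposal is correct and matches the paper's approach: the paper states this result as a corollary of Theorem~\ref{UBT} without a separate proof, and your argument is precisely the natural unpacking---restrict the universal Grothendieck transformation $\gamma_{\bB}$ to the morphisms $X\to pt$ (for part (1)) and $X\xrightarrow{\op{id}_X}X$ (for part (2)), read off the normalization from that of Theorem~\ref{UBT}, and deduce uniqueness by writing every generator as a pushforward (resp.\ Gysin pushforward) of an identity generator. Your bookkeeping checks that $[X\xrightarrow{\op{id}_X}X]$ actually lies in the relevant group are exactly the small points one needs to verify.
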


\begin{ex}\label{BT-example} Here we recall some important examples of bivariant theories from \cite{Fulton-MacPherson}. In these examples,  in each category $\Cal V$ we let $\Cal C = \Cal Prop$ be the class of proper morphisms and $\Cal S = \Cal Sm$ be the class of smooth morphisms and any fiber square is independent. 

\noindent
{\bf NOTE}: If we \emph {do not require the universality} of $\bM^{\Cal C} _{\Cal S}$, then we can take other morphisms such as local complete intersection morphisms for $\Cal S$, and also we can consider other more restricted squares such as Tor-independent squares for independent squares.\\

(1) \underline {Bivariant theory of constructible functions $\bF$}: Let $\Cal V$ be the category of complex analytic or algebraic varieties. Then there is a unique Grothendieck transformation
$$\ga_{\bF} : \bM^{\Cal Prop} _{\Cal Sm} \to \bF$$
such that for $f: X \to Y \in \Cal Sm$ the homomorphism $\ga_{\bF}:\bM^{\Cal Prop} _{\Cal Sm}(X  \xrightarrow{f}  Y) \to \bF(X  \xrightarrow{f}  Y)$ satsifies the normalization condition:
$\ga_{\bF}([X  \xrightarrow{\op {id}_X}  X]) = 1_f = \jeden_X.$

We have unique natural transformations
$${\ga_{\bF}}_* : {\bM^{\Cal Prop} _{\Cal Sm}}_* \to \bF_* = F$$
such that for any smooth variety $X$, ${\ga_{\bF}}_*([X  \xrightarrow{\op {id}_X}  X]) = \jeden_X\in F(X)$
and
$${\ga_{\bF}}^* : {\bM^{\Cal Prop} _{\Cal Sm}}^* \to \bF^*$$
such that for \emph {any} variety $X$, ${\ga_{\bF}}^*([X  \xrightarrow{\op {id}_X}  X]) = \jeden_X \in F^*(X).$
Here $F^*(X)$ is the abelian group of locally constant functions on $X$. \\

(2) \underline{Bivariant homology theory $\bH$}: Let $\Cal V$ be the category of complex analytic or algebraic varieties. Then there is a unique Grothendieck transformation
$$\ga_{\bH} : \bM^{\Cal Prop} _{\Cal Sm} \to \bH$$
such that for $f: X \to Y \in \Cal Sm$ the homomorphism
$\ga_{\bH} : \bM^{\Cal Prop} _{\Cal Sm}(X  \xrightarrow{f}  Y) \to \bH(X  \xrightarrow{f}  Y)$
satisfies the normalization condition: $\ga_{\bH}([X  \xrightarrow{\op {id}_X}  X]) = U_f.$
For the construction of the canonical orientation $U_f$, see \cite {Fulton-MacPherson}, Part II, \S 1.3, or \cite{BFM}, \S IV.4.
In particular, we have unique natural transformation:
$${\ga_{\bH}}_* : {\bM^{\Cal Prop} _{\Cal Sm}}_* \to \bH_* = H^{\op {BM}}_*$$
such that for any smooth variety $X$, ${\ga_{\bH}}_*([X  \xrightarrow{\op {id}_X}  X]) = [X] \in H^{\op {BM}}_*(X)$
and
$${\ga_{\bH}}^* : {\bM^{\Cal Prop} _{\Cal Sm}}^* \to \bH^*= H^*$$
such that for \emph {any} variety $X$, ${\ga_{\bH}}^*([X  \xrightarrow{\op {id}_X}  X]) = 1 \in H^*(X).$
Here $H^{\op {BM}}_*(X)$ is the Borel--Moore homology group and $H^*(X)$ is the usual cohomology group. \\

(3) \underline {Bivariant Chow group theory (or Operational bivariant Chow group theory) $\bA$}:  Let $\Cal V$ be the category of schemes. Then there is a unique Grothendieck transformation
$$\ga_{\bA} : \bM^{\Cal Prop} _{\Cal Sm} \to \bA$$
such that for  $f: X \to Y \in \Cal Sm$ the homomorphism
$\ga_{\bA} : \bM^{\Cal Prop} _{\Cal Sm}(X  \xrightarrow{f}  Y) \to \bA(X  \xrightarrow{f}  Y)$
satisfies the normalization condition: $\ga_{\bA}([X  \xrightarrow{\op {id}_X}  X])  = [f].$

We have unique natural transformations
$${\ga_{\bA}}_* : {\bM^{\Cal Prop} _{\Cal Sm}}_* \to \bA_* = A_* \quad (\text {or} \quad CH_*)$$
such that for any smooth scheme $X$, ${\ga_{\bA}}_*([X  \xrightarrow{\op {id}_X}  X]) = [X] \in A_*(X)$
and
$${\ga_{\bA}}^* : {\bM^{\Cal Prop} _{\Cal Sm}}^* \to \bA^*= A^* \quad (\text {or} \quad CH^*)$$
such that for \emph {any} scheme $X$, ${\ga_{\bA}}^*([X  \xrightarrow{\op {id}_X}  X]) = 1 \in A^*(X).$
Here $A_* = CH_*$ is the Chow homology group and $A^* = CH^*$ is the Chow cohomology group (see \cite{Fulton-book}). \\

(4) \underline {Bivariant algebraic $K$-theory $\bK_{\op {alg}}$}: \, Let $\Cal V$ be the category of quasi-projective \, schemes.  Then there is a unique Grothendieck transformation
$$\ga_{\bK_{\op {alg}}} : \bM^{\Cal Prop} _{\Cal Sm} \to \bK_{\op {alg}}$$
such that for  $f: X \to Y \in \Cal Sm$ the homomorphism
$\ga_{\bK_{\op {alg}}} : \bM^{\Cal Prop} _{\Cal Sm}(X  \xrightarrow{f}  Y) \to \bK_{\op {alg}}(X  \xrightarrow{f}  Y)$
satisfies the normalization condition: $\ga_{\bK_{\op {alg}}}([X  \xrightarrow{\op {id}_X}  X]) =  \Cal O_f.$
For the canonical orientation $\Cal O_f$, see \cite {Fulton-MacPherson}, Part II, \S 1.2.
We have unique natural transformations
$${\ga_{\bK_{\op {alg}}}}_* : {\bM^{\Cal Prop} _{\Cal Sm}}_* \to {\bK_{\op {alg}}}_* = K_0^{\op {alg}}$$
such that for any smooth scheme $X$, ${\ga_{\bK_{\op {alg}}}}_*([X  \xrightarrow{\op {id}_X}  X]) = [\Cal O_X] \in K_0^{\op {alg}}(X)$
and
$${\ga_{\bK_{\op {alg}}}}^* : {\bM^{\Cal Prop} _{\Cal Sm}}^* \to \bK_{\op {alg}}^* = K^0_{\op {alg}}$$
such that for \emph{any} scheme $X$, ${\ga_{\bK_{\op {alg}}}}^*([X  \xrightarrow{\op {id}_X}  X]) = 1 \in K^0_{\op {alg}}(X).$
\\

(5) \underline {Bivariant topological $K$-theory $\bK_{\op {top}}$}:\, Let  $\Cal V$ be the category of quasi-projective schemes. Then there is a unique Grothendieck transformation
$$\ga_{\bK_{\op {top}}} : \bM^{\Cal Prop} _{\Cal Sm} \to \bK_{\op {top}}$$
such that for  $f: X \to Y \in \Cal Sm$ the homomorphism
$\ga_{\bK_{\op {top}}} : \bM^{\Cal Prop} _{\Cal Sm}(X  \xrightarrow{f}  Y) \to \bK_{\op {top}}(X  \xrightarrow{f}  Y)$
satisfies the normalization condition: $\ga_{\bK_{\op {top}}}([X  \xrightarrow{\op {id}_X}  X])  = \Lambda_f.$ 
For the construction of the canonical orientation $\Lambda_f$, see \cite{Fulton-MacPherson}, Part II, \S 1.3,  or \cite{BFM}, \S IV.4.
We have unique natural transformation
$${\ga_{\bK_{\op {top}}}}_* : {\bM^{\Cal Prop} _{\Cal Sm}}_* \to {\bK_{\op {top}}}_* = K_0^{\op {top}}$$
such that for any smooth scheme $X$, ${\ga_{\bK_{\op {top}}}}_*([X  \xrightarrow{\op {id}_X}  X]) = \{X\} \in K_0^{\op {top}}(X)$
and
$${\ga_{\bK_{\op {top}}}}^* : {\bM^{\Cal Prop} _{\Cal Sm}}^* \to {\bK_{\op {top}}}^* = K^0_{\op {top}}$$
such that for \emph{any} variety $X$, ${\ga_{\bK_{\op {top}}}}^*([X  \xrightarrow{\op {id}_X}  X]) = 1 \in K^0_{\op {top}}(X).$ For more details of the topological $K$-theory, see \cite{BFM2}.
\\
\end{ex}

\begin{cor} (A na\"\i ve ``motivic" bivariant characteristic class) Let $c\ell:K^0 \to H^*(\quad)\otimes R$ be a multiplicative characteristic class of complex vector bundles with a suitable coefficients $R$. Then there exists a unique Grothendieck transformation
$$\ga_{\bH}^{c\ell} : \bM^{\Cal Prop} _{\Cal Sm} \to \bH(\quad) \otimes R$$
satisfying the normalization condition that for $f: X \to Y \in \Cal Sm$ 
$$\ga_{\bH}^{c\ell}([X  \xrightarrow{\op {id}_X}  X]) = c\ell(T_f) \bullet U_f.$$
Here $T_f$ is the relative tangent bundle of the smooth morphism $f$.
\end{cor}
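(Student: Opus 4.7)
The plan is to apply Theorem \ref{UBT} to the target bivariant theory $\bH(\,\cdot\,) \otimes R$, so the work reduces to producing a nice canonical $\bH\otimes R$-orientation of $\Cal Sm$ whose value on identities is $1_X$ and whose value on a general $f\in\Cal Sm$ is the prescribed class $c\ell(T_f)\bullet U_f$. Once such an orientation is in hand, the universal property will hand us $\ga_{\bH}^{c\ell}$ together with its uniqueness, and the normalization condition is built into the definition of the orientation.

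First I would verify that $\bH\otimes R$ is indeed a bivariant theory in $\Cal{BT}$; this is immediate since tensoring each abelian group $\bH(X\to Y)$ with $R$ preserves all three operations and all seven axioms. Next, define $\theta(f) := c\ell(T_f) \bullet U_f \in \bH(X\xrightarrow{f} Y)\otimes R$ for every $f\in\Cal Sm$. The identity axiom is clear: $T_{\op{id}_X}$ is the zero bundle, so $c\ell(T_{\op{id}_X}) = 1$ by multiplicativity of $c\ell$, and $U_{\op{id}_X} = 1_X$ since $U$ is itself a canonical orientation for $\bH$; hence $\theta(\op{id}_X) = 1_X$.

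The main computational step is checking the composition law $\theta(g\circ f) = \theta(f)\bullet \theta(g)$ for $f:X\to Y$ and $g:Y\to Z$ in $\Cal Sm$. Here one uses the short exact sequence of relative tangent bundles
\[0 \to T_f \to T_{g\circ f} \to f^*T_g \to 0,\]
together with the multiplicativity of $c\ell$, to obtain $c\ell(T_{g\circ f}) = c\ell(T_f)\cdot f^*c\ell(T_g)$. Combined with the analogous composition law $U_{g\circ f} = U_f \bullet U_g$ that makes $U$ a canonical orientation for $\bH$, and juggling the product, pullback, and projection-formula axioms (B-1), (B-5), (B-7) to move $c\ell$-factors past $U$-factors, one arrives at $\theta(g\circ f) = \theta(f) \bullet \theta(g)$. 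This bookkeeping is the part I expect to be the most delicate, since one has to be careful that $c\ell(T_f)$ lives in $\bH^*(X)$ while $U_f$ lives in the bivariant group $\bH(X\xrightarrow{f}Y)$, and these must be combined correctly via the bivariant product.

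Niceness under base change is comparatively easy: for an independent square with $f\in\Cal Sm$ and pullback $f':X'\to Y'$ along $g:Y'\to Y$, one has $T_{f'} \cong {g'}^*T_f$ (for $g':X'\to X$ the top map of the square), so $c\ell(T_{f'}) = {g'}^*c\ell(T_f)$, while $U_{f'} = g^*U_f$ because $U$ itself is a nice orientation. Using the axiom that pullback commutes with products (B-5) together with the compatibility of the unit, we deduce $\theta(f') = g^*\theta(f)$. With $\theta$ now verified as a nice canonical $\bH\otimes R$-orientation of $\Cal Sm$, Theorem \ref{UBT} yields a unique Grothendieck transformation $\ga_{\bH}^{c\ell}:\bM^{\Cal Prop}_{\Cal Sm}\to \bH(\,\cdot\,)\otimes R$ with $\ga_{\bH}^{c\ell}([X\xrightarrow{\op{id}_X} X]) = \theta(f) = c\ell(T_f)\bullet U_f$, as required.
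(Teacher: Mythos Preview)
Your proposal is correct and follows essentially the same route as the paper: define $\theta(f) = c\ell(T_f)\bullet U_f$, verify that it is a nice canonical orientation of $\Cal Sm$ (identity, composition, base-change stability), and invoke Theorem~\ref{UBT}. The paper's own proof is merely a terse list of the three required properties of $c\ell(T_f)$ alone, leaving the combination with the already-known nice canonical orientation $U_f$ implicit; your write-up just makes that combination explicit. One small correction: in your composition step the needed commutation $U_f \bullet c\ell(T_g) = f^*c\ell(T_g)\bullet U_f$ follows from the standing \emph{commutativity} hypothesis on the bivariant theory (stated in \S\ref{FM-BT}), not from the projection formula (B-7), which involves pushforwards that do not appear here.
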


\begin{proof} It suffices to point out that the multiplicative characteristic cohomology class \, $c\ell(T_f) \in H^*(X) = \bH( X  \xrightarrow{\op {id}_X}  X)$ satisfies the following properties:

\begin{enumerate}
\item
For smooth morphisms $f: X \to Y$ and $g: Y \to Z$ we have
$$ cl(T_f) \bullet g^*cl(T_g) = cl(T_f) \cup cl(g^*T_g) = cl(T_{g\circ f}).$$
\item
$c\ell(T_{\op {id}_X}) = 1 \in H^*(X).$
\item
for any fiber square with $f: X \to Y \in \Cal Sm$
$$\CD
X' @> g' >> X\\
@V f' VV @VV f V\\
Y' @>> g> Y \endCD
$$
$$g^*c\ell(T_f) = c\ell(T_{f'}).$$
\end{enumerate} 
\end{proof} 

\begin{cor}(A na\"\i ve ``motivic"  characteristic class of singular varieties) Let $c\ell:K^0 \to H^*(\quad)\otimes R$ be a multiplicative characteristic class of complex vector bundles with a suitable coefficients $R$. Then there exists a unique natural transformation 
$$c\ell_*: {\bM^{\Cal Prop} _{\Cal Sm}}_* \to H_*(\quad) \otimes R$$
such that for a smooth variety 
$$c\ell_*([V  \xrightarrow{\op {id}_V} V]) = c\ell(TV) \cap [V].$$
Here $TV$ is the tangent bundle of $V$.\\
\end{cor}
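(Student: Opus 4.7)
The plan is to deduce this corollary directly from the bivariant version already established. First I would invoke the preceding corollary to obtain the Grothendieck transformation $\ga_{\bH}^{c\ell} : \bM^{\Cal Prop}_{\Cal Sm} \to \bH(\quad) \otimes R$ normalized by $\ga_{\bH}^{c\ell}([X \xrightarrow{\op{id}_X} X]) = c\ell(T_f) \bullet U_f$ for $f : X \to Y$ in $\Cal Sm$. A Grothendieck transformation induces a natural transformation on covariant parts (taking $Y = pt$), so I would set
\[
c\ell_* \; := \; ({\ga_{\bH}^{c\ell}})_* \; : \; {\bM^{\Cal Prop}_{\Cal Sm}}_* \; \longrightarrow \; \bH_*(\quad) \otimes R \; = \; H_*(\quad) \otimes R,
\]
which is automatically covariantly functorial with respect to proper pushforward.

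Next I would verify the normalization. For a smooth variety $V$, the structure morphism $\pi_V : V \to pt$ lies in $\Cal Sm$, so $[V \xrightarrow{\op{id}_V} V]$ represents an element of $\bM^{\Cal Prop}_{\Cal Sm}(V \xrightarrow{\pi_V} pt) = {\bM^{\Cal Prop}_{\Cal Sm}}_*(V)$. Applying $\ga_{\bH}^{c\ell}$ with $f = \pi_V$ produces $c\ell(T_{\pi_V}) \bullet U_{\pi_V} = c\ell(TV) \bullet U_{\pi_V}$. Under the identification $\bH(V \to pt) = H_*(V)$, the orientation $U_{\pi_V}$ corresponds via ${\pi_V}^!(1_{pt}) = U_{\pi_V}$ to the fundamental class $[V]$, and the bivariant product of a cohomology class with a homology class is the cap product; this identifies $c\ell_*([V \xrightarrow{\op{id}_V} V])$ with $c\ell(TV) \cap [V]$. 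For uniqueness I would observe that ${\bM^{\Cal Prop}_{\Cal Sm}}_*(X)$ is generated by classes $[V \xrightarrow{h_V} X]$ with $h_V$ proper and $V$ smooth, each factoring as $(h_V)_*[V \xrightarrow{\op{id}_V} V]$; naturality with respect to proper pushforward then forces
\[
c\ell_*\bigl([V \xrightarrow{h_V} X]\bigr) \; = \; (h_V)_*\bigl(c\ell(TV) \cap [V]\bigr),
\]
so the normalization pins $c\ell_*$ down completely.

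The main point requiring care — already built into Fulton--MacPherson's formalism, so not a new calculation — is the passage from the abstract bivariant product $c\ell(TV) \bullet U_{\pi_V}$ to the classical cap product $c\ell(TV) \cap [V]$ on Borel--Moore homology. I would handle this by recalling that the covariant theory $\bH_*$ is $H_*^{\op{BM}}$, that the unit orientation $U_{\pi_V}$ paired with $1_{pt}$ gives $[V]$, and that the bivariant pairing $\bH^*(V) \otimes \bH_*(V) \to \bH_*(V)$ reduces to the usual cap product; the remainder of the argument is routine given the previous corollary and the functorial properties of $c\ell_*$.
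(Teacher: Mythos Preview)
Your proposal is correct and follows exactly the approach the paper intends: the paper gives no separate proof for this corollary, treating it as an immediate consequence of the preceding bivariant corollary by passing to the covariant part (see also Corollary~3.4 and the general remark that a Grothendieck transformation $\ga:\bB\to\bB'$ induces $\ga_*:\bB_*\to\bB'_*$). Your explicit normalization check and uniqueness argument simply spell out what the paper leaves tacit.
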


\begin{rem} In the case of algebraic varieties the covariant theory ${\bM^{\Cal Prop} _{\Cal Sm}}_* $ is used in \cite{BSY1} (see also \cite{BSY2}) (in which it is denoted by $Iso^{pr}(sm/X)$)  and there is a canonical natural transformation from ${\bM^{\Cal Prop} _{\Cal Sm}}_* $ to the covariant functor $K_0(\Cal Var / \quad )$ of relative Grothendieck group of varieties:
$$can: {\bM^{\Cal Prop} _{\Cal Sm}}_*(X)   \to K_0(\Cal Var / X)$$
which is surjective for any variety $X$ and F. Bittner \cite{Bi} proved that its kernel is described by the so-called ``blow-up relation" (see also \cite {BSY1}). 
$$
\xymatrix{ & {\bM^{\Cal Prop} _{\Cal Sm}}_*(X)  \ar[dl]_{can}  \ar[dr]^{c\ell_*} \\
 K_0(\Cal Var / X)\ar[rr]_{\natural} & & H_*(X) \otimes R}
$$
Certain restrictions are required on multiplicative characteristic classes $c\ell$ so that a homomorphism $\natural: K_0(\Cal Var / X) \to H_*(X) \otimes R$ exists and the above triangle becomes  commutative. For more details of such a homomorphism $\natural: K_0(\Cal Var / X) \to H_*(X) \otimes R$, see \cite{BSY1} (see also \cite{SY}). \

\end{rem}

Further discussions on ``motivic" bivariant characteristic classes will be done in a different paper.\\

\begin{rem}(Riemann--Roch Theorems)
We have the following commutative diagrams:
$$
\xymatrix{ & \bM^{\Cal Prop} _{\Cal Sm} \ar[dl]_{\ga_{\bK_{\op {alg}}}}  \ar[dr]^{\ga_{\bK_{\op {top}}}} & && \bM^{\Cal Prop} _{\Cal Sm} \ar[dl]_{\ga_{\bK_{\op {top}}}}   \ar[dr]^{\ga_{\bH}^{td}}  \\
\bK_{\op {alg}} \ar[rr]_{\alp} & & \bK_{\op {top}} & \bK_{\op {top}} \ar[rr]_{ch} & & \bH_{\bQ} }
$$

(i) $\alp: \bK_{\op {alg}} \to \bK_{\op {top}}$ is a Grothendieck transformation such that for a $\ell.c.i$ morphism $f: X \to Y$, $\alp (\Cal O_f) = \Lambda_f$. 

(ii) $ch: \bK_{\op {top}} \to \bH_{\bQ}$ is a Grothendieck transformation such that for an $\ell.c.i$ morphism $f: X \to Y$, $\alp (\Lambda_f) = td(T_f) \bullet U_f$,
where $td(T_f)$ is the total Todd cohomology class of the  relative tangent bundle  $T_f$ of the smooth morphism $f$. The composite $ch \circ \alp : \bK_{\op {alg}} \to \bH_{\bQ}$ is a bivariant version of Baum--Fulton--MacPherson's Riemann--Roch $\tau: K_0 \to {H_*}_{\bQ}$ (see \cite{BFM} and \cite{Fulton-book}). Thus one could say that $ch \circ \alp : \bK_{\op {alg}} \to \bH_{\bQ}$ and $ch: \bK_{\op {top}} \to \bH_{\bQ}$ are  realizations of a ``motivic" one: $\ga_{\bH}^{td} : \bM^{\Cal Prop} _{\Cal Sm} \to \bH_{\bQ}$.
\end{rem}

\section {Oriented Bivariant Theories}\label{OBT}

Levine--Morel's algebraic cobordism is the universal one among the so-called  \emph {oriented} Borel--Moore functors with products for algebraic schemes. Here ``oriented" means that the given Borel--Moore functor $H_*$ is equipped with an endomorphsim $\tilde c_1(L): H_*(X) \to H_*(X)$ for a line bundle $L$ over the scheme $X$. Motivated by this ``orientation", we introduce an orientation to bivariant theories for any category, using the notion of \emph {fibered categories} in abstract category theory, e.g, see \cite{Vistoli}. 

\begin{defn}
Let $\Cal L$ be a fibered category over $\Cal V$. An object in the fiber $\Cal L(X)$ over an object $X \in \Cal V$ is called an \emph {``fiber-object over $X$"}, abusing words, and denoted by $L$, $M$, etc.
\end{defn}

\begin{defn}\label{orientation} Let $\bB$ be a bivariant theory on a category $\Cal V$ and let $\Cal L$ be a fibered category over $\Cal V$. For a fiber-object $L$ over $X$, the \emph{``operator" on $\bB$ associated to the fiber-object $L$}, denoted by $\phi(L)$, is defined to be an \emph{endomorphism}
$$\phi(L): \bB(X  \xrightarrow{f}  Y) \to \bB(X  \xrightarrow{f}  Y) $$
which satisfies the following properties:

(O-1) {\bf identity}: If $L$ and $L'$ are fiber-objects over $X$ and isomorphic, then we have
$$\phi(L) = \phi(L'): \bB(X  \xrightarrow{f}  Y) \to \bB(X  \xrightarrow{f}  Y).$$

(O-2) {\bf commutativity}: Let $L$ and $L'$ be two fiber-objects over $X$, then we have
$$\phi(L) \circ \phi(L') = \phi(L') \circ \phi(L) :\bB(X  \xrightarrow{f}  Y) \to \bB(X  \xrightarrow{f}  Y). $$

(O-3)  {\bf compatibility with product}: For morphisms $f:X \to Y$ and $g:Y \to Z$,  $\alp \in \bB(X  \xrightarrow{f} Y)$ and $ \be \in \bB(Y  \xrightarrow{g} Z)$,  a fiber-object $L$ over $X$ and a fiber-object $M$ over $Y$
 $$ \phi(L) (\alp \bullet \be) = \phi(L)(\alp) \bullet \be, \quad  \phi(f^*M) (\alp \bullet \be) = \alp \bullet \phi(M)(\be)$$

(O-4)  {\bf compatibility with pushforward}: For a confined morphism $f:X \to Y$ and a fiber-object $M$ over $Y$ 
$$ f_*\left (\phi(f^*M)(\alp) \right ) = \phi(M)(f_*\alp).$$

(O-5)   {\bf compatibility with pullback}: For an independent square and a fiber-object $L$ over $X$
$$
\CD
X' @> g' >> X \\
@V f' VV @VV f V\\
Y' @>> g > Y 
 \endCD
$$

$$g^*\left (\phi(L)(\alp) \right ) = \phi({g'}^*L)(g^*\alp).$$

The above operator is called an ``{\it orientation}" and a bivariant theory equipped with such an orientation is called an {\it oriented bivariant theory}, denoted by $\bOB$. An {\it oriented Grothendieck transformation} between two oriented bivariant theories is a Grothendieck transformation which preserves or is compatible with the operator, i.e., for two oriented bivariant theories $\bOB$ with an orientation $\phi$ and $\bOB'$ with an orientation $\phi'$ the following diagram commutes
$$
\CD
\bOB (X  \xrightarrow{f}  Y)  @> {\phi(L)}>> \bOB (X  \xrightarrow{f}  Y) \\
@V \ga VV @VV \ga V\\
\bOB' (X  \xrightarrow{f}  Y) @>>{\phi'(L)} > \bOB' (X  \xrightarrow{f}  Y).
 \endCD
$$
\end{defn} 

\begin{rem} All we need above is only the fact that  it is `` closed under pull-back" or ``closed under base change". Thus, in this sense, we can define the above operator for a certain class $\Cal L$ of morphisms which is closed under base change; i.e., $f: L \to X \in \Cal L$ if and only if for any morphism $g: X' \to X \in \Cal V$ and the fiber square
$$
\CD
L' @> g' >> L \\
@V f' VV @VV f V\\
X' @>> g > X,
 \endCD
$$
the pull-back $f':L' \to X'$  belongs to $\Cal L$. Originally we considered this situation, however we delt with more generally fibered categories (suggested by J\"org Sch\"urmann). \\
\end{rem}

The following lemma shows that Levine--Morel's {\it oriented Borel--Moore functor with products} is a special case of an oriented bivariant theory.

\begin{lem} Let $\bOB$ be an oriented bivariant theory on a category $\Cal V$ with $\Cal L$ a fibered category over $\Cal V$. Then the orientation $\phi$ on the functors $\bOB_*$ and $\bOB^*$ satisfies the following properties:

(1) Let $L$ and $L'$ be two fiber-objects over $X$, then we have
$$\phi (L) \circ \phi(L') = \phi(L') \circ \phi(L) :\bOB_*(X) \to \bOB_*(X),$$
$$\phi (L) \circ \phi(L') = \phi(L') \circ \phi(L) :\bOB^*(X) \to \bOB^*(X),$$
and if $L$ and $L'$ are isomorphic, then we have that $\phi(L) = \phi (L')$ for both $\bOB_*$ and $\bOB^*$.

(2) For a fiber-object $L$ and $\alp \in \bOB_*(X)$ and $\be \in \bOB_*(Y)$, we have
 $$\phi(L) (\alp) \times \be = \phi({p_1}^*L)(\alp \times \be) .$$
Also, for $\alp \in \bOB^*(X)$ and $\be \in \bOB^*(Y)$, we have
 $$\phi(L) (\alp) \times \be = \phi({p_1}^*L)(\alp \times \be) .$$
Here $p_1: X \times Y \to X$ is the projection.

(3) For a confined morphism $f: X \to Y$ and a fiber-object $M$ over $Y$, we have
$$ f_* \circ \phi(f^*M) = \phi(M) \circ f_*: \bOB_*(X) \to \bOB_*(Y).$$

(4) For a specialized morphism $f :X \to Y \in \Cal S$ (here we just require that $f$ is canonically $\bOB$-oriented) and a fiber-object $M$ over $Y$, we have
$$\phi (f^*M) \circ f^! = f^! \circ \phi (M): \bOB_*(Y) \to \bOB_*(X).$$

(5) For a confined and specialized morphism $f: X \to Y$ and a fiber-object $M$ over $Y$, we have
$$f_! \circ \phi (f^*M) = \phi (M) \circ f_!: \bOB^*(X) \to \bOB^*(Y).$$

(6) For any morphism $f: X \to Y$ and a fiber-object $M$ over $Y$, we have
$$ \phi (f^*M) \circ f^* = f^*\circ \phi(M) : \bOB^*(Y) \to \bOB^*(X).$$

\end{lem}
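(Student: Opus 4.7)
My plan is to reduce each of the six assertions to a direct application of the orientation axioms (O-1)--(O-5) from Definition \ref{orientation}, specialized to the bivariant groups $\bOB_*(X) = \bOB(X \to pt)$ and $\bOB^*(X) = \bOB(X \xrightarrow{\op{id}_X} X)$, and combined with the explicit formulas for $f^!$, $f_!$, and the exterior products $\times$ recorded earlier in the paper.

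For (1), taking either $Y = pt$ or $f = \op{id}_X$ in (O-1) and (O-2) gives the claimed identity and commutativity statements on $\bOB_*$ and $\bOB^*$. Item (3) is (O-4) applied with the post-composed map $Y \to Z$ chosen to be $\pi_Y : Y \to pt$, so that the bivariant pushforward on $\bOB(X \to pt)$ becomes the pushforward on $\bOB_*$. Item (6) is (O-5) applied to the square whose horizontal arrows are $f$ and whose vertical arrows are $\op{id}_X$, $\op{id}_Y$, which is independent by axiom (ii) on independent squares in Section \ref{FM-BT}; the pullback $f^*: \bOB^*(Y) \to \bOB^*(X)$ is by definition taken with respect to this very square.

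Item (4) follows by unfolding $f^!(\alp) = \theta_{\bOB}(f) \bullet \alp$ and invoking the second half of (O-3) with pre-factor $\theta_{\bOB}(f) \in \bOB(X \xrightarrow{f} Y)$ and post-factor $\alp \in \bOB(Y \to pt)$. Item (5) follows by unfolding $f_!(\alp) = f_*(\alp \bullet \theta_{\bOB}(f))$, using the first half of (O-3) (read in reverse) to slide $\phi(f^*M)$ inside the bivariant product by $\theta_{\bOB}(f)$, and then applying (O-4) to slide $\phi(M)$ through the pushforward $f_*$. For (2) I would unfold $\alp \times \be = \pi_Y^*\alp \bullet \be$ in the $\bOB_*$ case (respectively $\alp \times \be = p_1^*\alp \bullet p_2^*\be$ in the $\bOB^*$ case), use (O-5) to identify $\phi(p_1^*L)(\pi_Y^*\alp)$ with $\pi_Y^*(\phi(L)(\alp))$ via the pullback square whose top arrow is $p_1$, and then use the first half of (O-3) to pull $\phi(p_1^*L)$ through the bivariant product, arriving at $\phi(L)(\alp) \times \be$.

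No single step is a serious obstacle; the only bookkeeping care required is to ensure that at every invocation of (O-3) or (O-5) the fiber-object is attached to the correct space, namely to the source of the bivariant element being acted on (for (O-3)) or to the top-right vertex of the pullback square (for (O-5)). For example, in (5) the object $f^*M$ lives over $X$, matching the source $X$ of the factor $\alp \in \bOB^*(X)$, which is exactly what the first half of (O-3) demands. Once this bookkeeping is kept straight, the lemma follows formally from the oriented bivariant axioms.
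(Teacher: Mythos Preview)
Your proposal is correct and follows essentially the same route as the paper: each item is traced back to the corresponding orientation axiom (O-1)--(O-5), specialized to the maps $X \to pt$ or $\op{id}_X$. Your argument for (2) is in fact a bit more careful than the paper's one-line ``follows from the first formula of (O-3)'': as you observe, one also needs (O-5) (for the pullback square with top arrow $p_1$) to identify $\phi(p_1^*L)(\pi_Y^*\alp)$ with $\pi_Y^*(\phi(L)(\alp))$ before the first half of (O-3) finishes the job.
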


\begin{proof}
(1) follows from (O-1) and (O-2). 

(2) follows from the first formula of (O-3). 

(3) follows from (O-4).

 (4) follows from the second formula of (O-3). 

(5) follows from the first formula of (O-3) and (O-4).

(6) follows from (O-5). 
\end{proof}

\begin{ex} All the examples, except the bivariant theory $\bF$ of constructible functions, given in Example (\ref{BT-example}) are in fact oriented bivariant theories, if we consider line bundles (or any bundles) for a fibered category and Chern classes (or any characteristic classes) for operators. The operator  $\phi(L):= c_1(L) \bullet $  is taking the bivariant product with the first Chern class as a bivariant element $c_1(L)$ in the bivariant group of the identity $X \xrightarrow{\op {id}_X} X$.  One can of course consider another operator $\phi'(L) := c(L) \bullet$, the bivariant product with the total Chern class of the line bundle.  Then it follows from the axioms of the bivariant theory that this operator satisfies the properties (O-2) --- (O-5).  For example, in the case of bivariant homology theory $\bH$:
For a line bundle $L \to X$, the first Chern class operator $\tilde c_1(L) :\bH(X  \xrightarrow{f} Y) \to \bH(X  \xrightarrow{f} Y) $
is defined by $\tilde c_1(L)(\alp) := c_1(L) \bullet \alp$, 
where $c_1(L) \in \bH(X  \xrightarrow{\op {id}_X} X) = H^*(X)$ is the first Chern cohomology class of the line bundle.  However, as to the bivariant theory $\bF$ of constructible functions, $\bF^*(X) = \bF(X  \xrightarrow{\op {id}_X}  X)$ consists of locally constant functions. So, for a vector bundle $E$ over $X$, we do not know any reasonable geometrically or topologically defined operator $\phi(E): \bF(X  \xrightarrow{\op {id}_X}  X) \to \bF(X  \xrightarrow{\op {id}_X}  X)$ other than the multiplication of the rank of the vector bundle $E$. 
\end{ex}

In fact, mimicking Levine--Morel's construction \cite{LM-book}, we show the existence of a universal one among such oriented bivariant theories for \emph {any} category $\Cal V$ and a fibered category $\Cal L$ over the category $\Cal V$ .

Let us consider a morphism $h_V:V \to X$ \emph {equipped with} finitely many fiber-objects  over the source variety $V$ of the morphism $h_V$:
$$(V  \xrightarrow{h_V} X; L_1, L_2, \cdots, L_r)$$
with $L_i$ being a fiber-object over $V$. Note of course that we can consider only the morphism $(V  \xrightarrow{h_V} X)$ without any fiber-objects equipped. This family is called 
a {\it cobordism cycle over $X$}, following \cite{LM-book} (see also \cite{Merkurjev}). Then 
$(V  \xrightarrow{h_V} X; L_1, L_2, \cdots, L_{r'})$ is defined to be isomorphic to  $(W  \xrightarrow{h_W} X; M_1, M_2, \cdots, M_r)$ if and only if $h_V$ and $h_W$ are isomorphic, i.e., there is an isomorphism $g:V \cong W$ over $X$, there is a bijection $\sigma: \{1, 2, \cdots, r \}\cong  \{1, 2, \cdots, r'\}$ (so that $r = r'$) and there are isomorphisms $L_i \cong g^*M_{\sigma(i)}$ for every $i$.

\begin{thm} (A universal oriented bivariant theory)
 Let  $\Cal V$ be a cateogry with a class $\Cal C$ of confined morphisms, a class of independent squares, a class  $\Cal S$ of specialized morphisms and a fibered category $\Cal L$ over $\Cal V$.  We define 
$$\bOM^{\Cal C} _{\Cal S}(X  \xrightarrow{f}  Y)$$
to be the free abelian group generated by the set of isomorphism classes of cobordism cycles over $X$
$$[V  \xrightarrow{h} X; L_1, L_2, \cdots, L_r]$$
such that the composite of $h$ and $f$
$$f \circ h: W \to Y \in \Cal S.$$

(1) The assignment $\bOM^{\Cal C} _{\Cal S}$ becomes an oriented bivariant theory if the four operations are defined as follows:

{\bf Orientation $\Phi$}: For a morphism $f:X \to Y$ and a fiber-object $L$ over $X$, the operator
$$\Phi (L):\bOM^{\Cal C} _{\Cal S} ( X  \xrightarrow{f}  Y) \to \bOM^{\Cal C} _{\Cal S} ( X  \xrightarrow{f}  Y) $$
is defined by
$$\Phi(L)([V  \xrightarrow{h_V} X; L_1, L_2, \cdots, L_r]):=[V  \xrightarrow{h_V} X; L_1, L_2, \cdots, L_r, (h_V)^*L].$$

{\bf Product operations}: For morphisms $f: X \to Y$ and $g: Y
\to Z$, the product operation
$$\bullet: \bOM^{\Cal C} _{\Cal S} ( X  \xrightarrow{f}  Y) \otimes \bOM^{\Cal C} _{\Cal S} ( Y  \xrightarrow{g}  Z) \to
\bOM^{\Cal C} _{\Cal S} ( X  \xrightarrow{gf}  Z)$$
is  defined as follows: The product on generators is defined by
\begin{align*}
& [V  \xrightarrow{h_V} X; L_1, \cdots, L_r]  \bullet [W  \xrightarrow{k_W} Y; M_1, \cdots, M_s] \\
& :=  [V'  \xrightarrow{h_V \circ k''_W}  X; {k''_W}^*L_1, \cdots,{k''_W}^*L_r, (f' \circ {h'_V})^*M_1, \cdots, (f' \circ {h'_V})^*M_s ],\
\end{align*}
and it extends bilinearly. Here we consider the following fiber squares
$$\CD
V' @> {h'_V} >> X' @> {f'} >> W \\
@V {k''_W}VV @V {k'_W}VV @V {k_W}VV\\
V@>> {h_V} > X @>> {f} > Y @>> {g} > Z .\endCD
$$

{\bf Pushforward operations}: For morphisms $f: X \to Y$
and $g: Y \to Z$ with $f$ confined, the pushforward operation
$$f_*: \bOM^{\Cal C} _{\Cal S} ( X  \xrightarrow{gf} Z) \to \bOM^{\Cal C} _{\Cal S} ( Y  \xrightarrow{g}  Z) $$
is  defined by
$$f_*\left (\sum_Vn_V[V  \xrightarrow{h_V} X; L_1, \cdots, L_r]  \right) := \sum _Vn_V[V  \xrightarrow{f \circ h_V} Y; L_1, \cdots, L_r] .$$

{\bf  Pullback operations}: For an independent square
$$\CD
X' @> g' >> X \\
@V f' VV @VV f V\\
Y' @>> g > Y, \endCD
$$
the pullback operation
$$g^*: \bOM^{\Cal C} _{\Cal S} ( X  \xrightarrow{f} Y) \to \bOM^{\Cal C} _{\Cal S}( X'  \xrightarrow{f'} Y') $$
is  defined by
$$g^*\left (\sum_V n_V[V  \xrightarrow{h_V} X; L_1, \cdots, L_r] \right):=  \sum_V n_V[V'  \xrightarrow{{h'_V}}  X'; {g''}^*L_1, \cdots, {g''}^*L_r],$$
where we consider the following fiber squares:
$$\CD
V' @> g'' >> V \\
@V {h_V'} VV @VV {h_V}V\\
X' @> g' >> X \\
@V f' VV @VV f V\\
Y' @>> g > Y. \endCD
$$

(2) Let $\Cal {OBT}$ be a class of oriented bivariant theories $\bOB$ on the same category $\Cal V$ with a class $\Cal C$ of confined morphisms, a class of independent squares, a class $\Cal S$ of specialized morphisms and a fibered category $\Cal L$ over $\Cal V$. Let $\Cal S$ be nice canonically $\bOB$-oriented for any oriented bivariant theory $\bOB \in \Cal {OBT}$. Then, for each oriented bivariant theory $\bOB \in \Cal {OBT}$ with an orientation $\phi$  there exists a unique oriented Grothendieck transformation
$$\ga_{\bOB} : \bOM^{\Cal C} _{\Cal S} \to \bOB$$
such that for any $f: X \to Y \in \Cal S$ the homomorphism
$\ga_{\bOB} : \bOM^{\Cal C} _{\Cal S}(X  \xrightarrow{f}  Y) \to \bOB(X  \xrightarrow{f}  Y)$
satisfies the normalization condition that $$\ga_{\bOB}([X  \xrightarrow{\op {id}_X}  X; L_1, \cdots, L_r]) = \phi(L_1) \circ \cdots \circ \phi(L_r) (\theta_{\bOB}(f)).$$
\end{thm}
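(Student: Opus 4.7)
The plan is to follow the blueprint of Theorem~\ref{UBT}, enriching each step by the bookkeeping of fiber-objects. For part~(1), the well-definedness of the bivariant product on $\bOM^{\Cal C}_{\Cal S}$ reduces to the argument already given in Theorem~\ref{UBT}, since the composite $(g\circ f)\circ(h_V \circ k_W'')$ equals $(g\circ k_W)\circ(f'\circ h_V')$ and still lies in $\Cal S$; the additional ingredient is only that the pullbacks ${k_W''}^*L_i$ and $(f'\circ h_V')^*M_j$ exist because $\Cal L$ is a fibered category. The seven bivariant axioms for the four operations then reduce to those of Theorem~\ref{UBT}, supplemented by the canonical isomorphisms $(g\circ h)^*L\cong h^*g^*L$ intrinsic to a fibered category, which allow the fiber-object decorations to be identified after repeated products, pullbacks, and pushforwards. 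The orientation axioms (O-1)--(O-5) for the operator $\Phi(L)$, which acts by appending $h_V^*L$ to the fiber-object list, are nearly tautological: (O-1) is identity of fiber-objects up to isomorphism, (O-2) is the irrelevance of the order of entries in the list (built into the definition of isomorphism of cobordism cycles), and (O-3)--(O-5) follow by unwinding the definitions of the three bivariant operations.

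For part~(2), uniqueness is forced by the observation that every generator decomposes as
\[
[V  \xrightarrow{h_V} X; L_1, \cdots, L_r] = {h_V}_* \Bigl( \Phi(L_1) \circ \cdots \circ \Phi(L_r) ([V  \xrightarrow{\op{id}_V} V])\Bigr),
\]
where $[V \xrightarrow{\op{id}_V} V]$ lies in $\bOM^{\Cal C}_{\Cal S}(V \xrightarrow{f\circ h_V} Y)$ because $f\circ h_V \in \Cal S$. Applying compatibility with pushforward, compatibility with the orientation operator, and the normalization, any oriented Grothendieck transformation is forced into
\[
\ga_{\bOB}([V \xrightarrow{h_V} X; L_1, \cdots, L_r]) = {h_V}_*\bigl(\phi(L_1)\circ\cdots\circ\phi(L_r)(\theta_{\bOB}(f\circ h_V))\bigr).
\]
Defining $\ga_{\bOB}$ by this formula, well-definedness with respect to the choice of representative runs exactly as in Theorem~\ref{UBT}---using the $\Cal C$-independence assumption together with the nice $\bOB$-orientedness of $\Cal S$---with the extra observation that $\phi$ respects isomorphisms of fiber-objects by (O-1).

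It remains to verify that this $\ga_{\bOB}$ is an oriented Grothendieck transformation. Compatibility with pushforward and pullback proceeds as in Theorem~\ref{UBT}, combined with the fibered-category identifications $g^*h^*M \cong (h\circ g)^*M$; compatibility with the orientation operator is immediate, as both sides simply append one more $\phi$ to the composition. The genuine obstacle---as in the unoriented theorem---is compatibility with the product, where $\ga_{\bOB}\bigl([V \to X; L_i] \bullet [W \to Y; M_j]\bigr)$ must be matched against $\ga_{\bOB}([V \to X; L_i]) \bullet \ga_{\bOB}([W \to Y; M_j])$. The plan is to follow the product calculation from the proof of Theorem~\ref{UBT} verbatim---pushing $k''_W$ through using (B-4), (B-6), and the projection formula (B-7)---and then invoke (O-3), (O-4), and (O-5) to strip the operators $\phi(L_i)$ and $\phi(M_j)$ past the bivariant operations so they land on the correct factor.
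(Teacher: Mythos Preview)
Your proposal is correct and follows essentially the same approach as the paper: the paper likewise reduces well-definedness to Theorem~\ref{UBT}, records the decomposition $[V \xrightarrow{h_V} X; L_1,\dots,L_r] = {h_V}_*\bigl(\Phi(L_1)\circ\cdots\circ\Phi(L_r)([V \xrightarrow{\op{id}_V} V])\bigr)$ to force uniqueness, defines $\ga_{\bOB}$ by the resulting formula, and verifies compatibility with product via (O-3), (O-4), the nice canonical orientation, (B-7), and (B-4). One small bookkeeping correction: in the product calculation (B-6) is not actually invoked---the replacement $\theta_{\bOB}(f'\circ h_V') = k_W^*\theta_{\bOB}(f\circ h_V)$ comes directly from the nice canonical orientation hypothesis---and (O-5) is the key axiom in the \emph{pullback} verification rather than the product one.
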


\begin{proof} (1): It is easy to see that the above four operations are well-defined.  Here we also make the following observations:

\noindent 
\underline{Observation (*)}: Let $L$ be a fiber-object over $X$. For $[X \xrightarrow{\op {id}_X}  X; L] \in \bOM^{\Cal C} _{\Cal S}(X \xrightarrow{\op {id}_X}  X)$ and  $[V  \xrightarrow{h_V} X; L_1, L_2, \cdots, L_r] \in \bOM^{\Cal C} _{\Cal S}(X \xrightarrow{f}  Y)$, by the definition of bivariant product and by using the following fiber squares 

$$\CD
V @> {id_V} >> V @> {id_V} >> V \\
@V {h_V} VV @V {h_V} VV @V {h_V}VV \\
X@>> {id_X} > X @>> {id_X} > X @>> {f} > Y \endCD
$$
we have
\begin{align*}
[X \xrightarrow{\op {id}_X}  X; L] \bullet & [V  \xrightarrow{h_V} X; L_1, L_2, \cdots, L_r] \\
& =  [V  \xrightarrow{h_V} X; (h_V)^* L, L_1, L_2, \cdots, L_r] \\
& =  [V  \xrightarrow{h_V} X;  L_1, L_2, \cdots, L_r, (h_V)^* L] \\
& = \Phi(L) ([V  \xrightarrow{h_V} X;  L_1, L_2, \cdots, L_r]) 
\end{align*}
Hence the above operator  $\Phi (L):\bOM^{\Cal C} _{\Cal S} ( X  \xrightarrow{f}  Y) \to \bOM^{\Cal C} _{\Cal S} ( X  \xrightarrow{f}  Y) $ is the same as $[X \xrightarrow{\op {id}_X}  X; L] \bullet$, i.e., taking the bivariant product with the ``motivic" class of $L$, $[X \xrightarrow{\op {id}_X}  X; L] \in \bOM^{\Cal C} _{\Cal S}(X \xrightarrow{\op {id}_X}  X)$.

\noindent
\underline{Observation (**)}: For $[V  \xrightarrow{h_V} X; L_i] \in \bOM^{\Cal C} _{\Cal S}(X \xrightarrow{f}  Y)$ with $L_i$ a fiber-object over $V$, we have $[V  \xrightarrow{id_V} V; L_i] \in \bOM^{\Cal C} _{\Cal S}(V \xrightarrow{f \circ h_V}  Y)$ and 
$$[V  \xrightarrow{h_V} X; L_i] = {h_V}_* [V  \xrightarrow{id_V} V; L_i].$$
In general, $[V  \xrightarrow{h_V} X; L_1, \cdots, L_r] = {h_V}_* [V  \xrightarrow{id_V} V;L_1, \cdots, L_r].$ Furthermore we have 
$$[V  \xrightarrow{id_V} V; L_i] = \Phi(L_i)([V  \xrightarrow{id_V} V]).$$
 In general, $[V  \xrightarrow{id_V} V; L_1, \cdots, L_r] = \Phi(L_1) \circ \cdots \Phi(L_r) ([V  \xrightarrow{id_V} V])$. Therefore we get that
$$[V  \xrightarrow{h_V}  X; L_1, \cdots, L_r] =  {h_V}_* \left (\Phi (L_1) \circ \cdots \circ \Phi (L_r) ([V  \xrightarrow{\op {id}_V}  V]) \right ).$$

(2): Suppose that there is an oriented Grothendieck transformation
$$\ga: \bOM^{\Cal C} _{\Cal S} \to \bOB$$
satisfying that for any $f: X \to Y \in \Cal S$ the homomorphism
$\ga: \bOM^{\Cal C} _{\Cal S} (X  \xrightarrow{f}  Y) \to \bOB(X  \xrightarrow{f}  Y)$
satisfies that $\ga([X  \xrightarrow{\op {id}_X}  X]) = \theta_{\bOB}(f).$  It suffices to show that the value of any generator 
$$[V  \xrightarrow{h_V}  X; L_1, \cdots, L_r] \in \bOM^{\Cal C} _{\Cal S} (X  \xrightarrow{f}  Y)$$
is uniquely determined.  From the above Observation (**), we have

\begin{align*}
\ga([V  \xrightarrow{h_V}  X; L_1, \cdots, L_r] ) 
& = \ga \left ( {h_V}_* \left (\Phi (L_1) \circ \cdots \circ \Phi (L_r) ([V  \xrightarrow{\op {id}_V}  V]) \right ) \right ) \\
& = {h_V}_* \left (\ga  \left (\Phi (L_1) \circ \cdots \circ \Phi (L_r) ([V  \xrightarrow{\op {id}_V}  V]) \right ) \right ) \\
&= {h_V}_*\left (\phi (L_1) \circ \cdots \circ \phi(L_r) \ga([V  \xrightarrow{\op {id}_V}  V]) \right ) \\
&= {h_V}_* \left (\phi(L_1) \circ \cdots \circ \phi (L_r) \theta_{\bOB}(f \circ h_V) \right ) \
\end{align*}

Thus the uniqueness follows. 

Next, we show the existence of such an oriented Grothendieck transformation satisfying the above normalization condition. We define the assignment
$$\ga_{\bOB} : \bOM^{\Cal C} _{\Cal S}(X  \xrightarrow{f}  Y) \to \bOB(X  \xrightarrow{f}  Y)$$
by
$$\ga_{\bOB} ([V  \xrightarrow{h_V}  X; L_1, \cdots, L_r] ) 
:= {h_V}_* \left (\phi(L_1) \circ \cdots \circ \phi(L_r) \theta_{\bOB}(f \circ h_V) \right ). $$
This certainly satisfies the normalization condition.

 The rest is to show that it is an oriented Grothendieck transformation.
 
(i) \underline {it preserves the product operation}:It suffices to show that
\begin {align*}
& \ga_{\bOB} \left ([V  \xrightarrow{h_V} X;L_1, \cdots, L_r ]\bullet [W  \xrightarrow{k_W} Y; M_1, \cdots, M_s] \right )  \\
& = \ga_{\bOB}([V  \xrightarrow{h_V} X; L_1, \cdots, L_r]) \bullet \ga_{\bOB}([W  \xrightarrow{k_W} Y;M_1, \cdots, M_s])\
\end{align*}
 Using some parts of the proof of Theorem (\ref{UBT}), we have 
\begin {align*}
& \ga_{\bOB} \left ([V  \xrightarrow{h_V} X;L_1, \cdots, L_r ]\bullet [W  \xrightarrow{k_W} Y; M_1, \cdots, M_s] \right )  \\
& =  \ga_{\bOB} \bigl ([V'  \xrightarrow {h_V \circ k''_W}  X; {k''_W}^*L_1, \cdots,{k''_W}^*L_r, (f' \circ h'_V)^*M_1, \cdots, (f' \circ h'_V)^*M_s ]\bigr )  \\
& = {h_V}_*{k''_W}_* \bigl (\phi({k''_W}^*L_1) \circ \cdots \circ \phi({k''_W}^*L_r) \circ \\
&  \hspace{1cm} \phi((f' \circ h_V')^*M_1) \circ \cdots 
\circ \phi((f' \circ h_V')^*M_s) \bigr ) ( \theta_{\bOB}(f' \circ h_V') \bullet \theta_{\bOB}(g \circ k_W)) \bigr ) . \
\end{align*}

Here we use the property (O-4)  {\bf compatibility with pushforward} and (O-3)  {\bf compatibility}  {\bf  with product} [$\phi(f^*M) (\alp \bullet \be) = \alp \bullet \phi(M)(\be)$],   the above equality continues as follows:

\begin{align*}
& = {h_V}_*\bigl (\phi(L_1) \circ \cdots \circ \phi(L_r)  \circ { k''_W}_* \\
& \hspace {1cm} \left (\phi((f' \circ h_V')^*M_1) \circ \cdots 
\circ \phi((f' \circ h_V')^*M_s) \right ) ( \theta_{\bOB}(f' \circ h_V') \bullet \theta_{\bOB}(g \circ k_W) 
\bigr ). \\
& ={h_V}_* \left (\phi(L_1) \circ \cdots \circ \phi(L_r) \right)  \circ ( k''_W)_* \\
& \hspace{3cm} \left (\theta_{\bOB}(f' \circ h_V') \bullet  \left (\phi(M_1) \circ \cdots 
\circ \phi(M_s) \right ) \theta_{\bOB}(g \circ k_W) \right ). \\
& = {h_V}_*\bigl (\phi(L_1) \circ \cdots \circ \phi(L_r)  \circ { k''_W}_* \\
& \hspace{3cm} \left ((k_W)^*\theta_{\bOB}(f \circ h_V) \bullet  (\phi(M_1) \circ \cdots 
\circ \phi(M_s) \right ) \theta_{\bOB}(g \circ k_W) \bigr ). \\
& = {h_V}_*\left (\phi(L_1) \circ \cdots \circ \phi(L_r) \right) \\
& \hspace{3cm} \left (\theta_{\bOB}(f \circ h_V) \bullet  {k_W}_*\left (\phi(M_1) \circ \cdots 
\circ \phi(M_s) \right ) \theta_{\bOB}(g \circ k_W) \right ) \\
& \hspace{8cm}  \text {(by (B-7) projection formula).} 
\end {align*}

Furthermore, using (O-3)  {\bf compatibility with product} [$ \phi(L) (\alp \bullet \be) = \phi(L)(\alp) \bullet \be$ ] and by (B-4), it continues as follows:

\begin{align*}
& = {h_V}_*\left (\phi(L_1) \circ \cdots \circ \phi(L_r) \theta_{\bOB}(f \circ h_V) \right) \bullet  \\
& \hskip 3cm {k_W}_*\left (\phi(M_1) \circ \cdots \circ \phi(M_s)  \theta_{\bOB}(g \circ k_W) \right) \\
& = \ga_{\bOB}([V  \xrightarrow{h_V} X; L_1, \cdots, L_r]) \bullet \ga_{\bOB}([W  \xrightarrow{k_W} Y;M_1, \cdots, M_s])\
\end{align*}

(ii) \underline {it preserves the pushforward operation}:Consider $X  \xrightarrow{f} Y \xrightarrow{g} Z$ and a confined morphsim $h_V:V \to X$ such that the composite $(g \circ f )\circ h_V: V \to Y$ is in $\Cal S$. For a generator
$$[V  \xrightarrow{h_V} X;L_1, \cdots, L_r] \in  \bOM^{\Cal C} _{\Cal S}(X  \xrightarrow{g \circ f}  Z),$$
we have
\begin{align*}
& \ga_{\bOB}(f_*[V  \xrightarrow{h_V} X;L_1, \cdots, L_r]) \\
&= \ga_{\bOB}([V  \xrightarrow{f \circ h_V} Y;L_1, \cdots, L_r]) \\
& = (f \circ h_V)_*\left (\phi(L_1) \circ \cdots \circ \phi(L_r) (\theta_{\bOB}(g \circ (f \circ h_V))\right) \\
& = f_*{h_V}_*\left (\phi(L_1) \circ \cdots \circ \phi(L_r) \theta_{\bOB}((g \circ f) \circ h_V) \right) \\
& = f_* \ga_{\bOB}([V  \xrightarrow{h_V} X;L_1, \cdots, L_r]).\
\end{align*}

(iii) \underline {it preserves the pullback operation}: Consider a confined morphsim $h_V:V \to X$ such that the composite $f \circ h_V: V \to Y$ is in $\Cal S$ and the fiber squares given in {\bf Pullback operations} above, we have

\begin{align*}
& \ga_{\bOB}(g^*[V  \xrightarrow{h_V} X;L_1, \cdots, L_r]) \\&= \ga_{\bOB}([V'  \xrightarrow{
h_V'} X'; {g''}^*L_1, \cdots, {g''}^*L_r]) \\
& = {h_V'}_*\left (\phi({g''}^*L_1) \circ \cdots \circ \phi({g''}^*L_r) \theta_{\bOB}(f' \circ h_V') \right)\\
& = {h_V'}_*\left (\phi({g''}^*L_1) \circ \cdots \circ \phi({g''}^*L_r)  g^*\theta_{\bOB}(f \circ h_V) \right) \\
& = {h_V'}_*g^*\left (\phi(L_1) \circ \cdots \circ \phi(L_r) \theta_{\bOB}(f \circ h_V) \right)\\
& = g^*{h_V}_*\left (\phi(L_1) \circ \cdots \circ \phi(L_r) \theta_{\bOB}(f \circ h_V) \right)\quad \text {(by (B-5))}\\
& = g^*\ga_{\bOB}([V  \xrightarrow{h_V} X;L_1, \cdots, L_r]).\
\end{align*}
\end{proof}

\begin{cor} The abelian group ${\bOM^{\Cal C} _{\Cal S}}_*(X):= {\bOM^{\Cal C} _{\Cal S}}(X \to pt)$
is the free abelian group generated by the set of  isomorphism classes of cobordism cycles
$$[V  \xrightarrow{h_V} X;L_1, \cdots, L_r]$$
such that $h_V:V \to X \in \Cal C$ and $V \to pt$ is a specialized map in $\Cal S$ and $L_i$ is a fiber-object over $V$. 
The abelian group ${\bOM^{\Cal C} _{\Cal S}}^*(X):= {\bOM^{\Cal C} _{\Cal S}}(X  \xrightarrow{\op {id}_X} X)$
is the free abelian group generated by the set of  isomorphism classes of cobordism cycles
$$[V  \xrightarrow{h_V} X;L_1, \cdots, L_r]$$
such that $h_V:V \to X \in \Cal C \cap \Cal S$ and $L_i$ is a fiber-object over $V$. Both functor ${\bOM^{\Cal C} _{\Cal S}}_*$ and ${\bOM^{\Cal C} _{\Cal S}}^*$  are oriented Borel--Moore functors with products in the sense of Levine--Morel.\\
\end{cor}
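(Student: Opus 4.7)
The first assertion follows immediately by unravelling the definition of $\bOM^{\Cal C}_{\Cal S}(X \xrightarrow{f} Y)$ in the two special cases. The plan is to substitute $Y = pt$ (so that $f = \pi_X$) and observe that the condition $f\circ h_V \in \Cal S$ reads $\pi_V = \pi_X \circ h_V \in \Cal S$, i.e.\ $V \to pt$ is specialized; here $h_V \in \Cal C$ is imposed from the definition of a generator. Dually, substituting $f = \op{id}_X$ reduces $f \circ h_V \in \Cal S$ to $h_V \in \Cal S$, which combined with $h_V \in \Cal C$ yields $h_V \in \Cal C \cap \Cal S$. No further manipulation is required.

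For the second assertion, the plan is not to verify the Levine--Morel axioms from scratch but to deduce them from the two structural results already in the paper. First, the preceding Theorem says that $\bOM^{\Cal C}_{\Cal S}$ is an oriented bivariant theory (with orientation $\Phi$). Second, the Lemma on orientations says that whenever $\bOB$ is an oriented bivariant theory, the associated functors $\bOB_*$ and $\bOB^*$ come equipped with exactly the data of a Levine--Morel oriented Borel--Moore functor with products: covariant functoriality of $f_*$ for $f \in \Cal C$, Gysin contravariant functoriality of $f^!$ for $f \in \Cal S$ (resp.\ $f_!$ for $f \in \Cal C \cap \Cal S$), base change in independent squares, compatibility with the exterior product (supplied by the cross-product Proposition), and operators $\phi(L)$ satisfying identity, commutativity, and compatibility with $f_*$, $f^!$, $\times$, and $f^*$. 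Applying this Lemma to $\bOM^{\Cal C}_{\Cal S}$ then delivers the desired OBMF structure on ${\bOM^{\Cal C}_{\Cal S}}_*$ and ${\bOM^{\Cal C}_{\Cal S}}^*$.

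The main (and essentially only) obstacle is bookkeeping, namely checking that the list of properties singled out in the Lemma matches, item by item, the definition of an oriented Borel--Moore functor with products as formulated in Levine--Morel. A secondary point to acknowledge is the \emph{additivity} axiom in the Levine--Morel definition, which is not automatic from the bivariant setup; as noted in the Remark following the earlier Proposition, additivity requires the auxiliary hypotheses (closure under $\coprod$, $\Cal C$-independence, strong orientability of the inclusions $i_X, i_Y$). Under these standing assumptions, additivity of $\bOM^{\Cal C}_{\Cal S}$ itself follows formally from the corresponding universal property, which then passes to ${\bOM^{\Cal C}_{\Cal S}}_*$ and ${\bOM^{\Cal C}_{\Cal S}}^*$; otherwise the corollary should be read as asserting the remaining axioms of an OBMF, with additivity conditional on the category satisfying the hypotheses of the Remark.
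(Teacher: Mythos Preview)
Your proposal is correct and matches the paper's (implicit) approach: the paper states this corollary without proof, treating the description of the generators as immediate from the definition of $\bOM^{\Cal C}_{\Cal S}(X \xrightarrow{f} Y)$ upon specializing $f$, and the OBMF structure as a direct consequence of the preceding Theorem (that $\bOM^{\Cal C}_{\Cal S}$ is an oriented bivariant theory) together with the Lemma and Proposition relating $\bOB_*$, $\bOB^*$ to the Levine--Morel axioms. One small caution: your claim that additivity of $\bOM^{\Cal C}_{\Cal S}$ ``follows formally from the corresponding universal property'' is not quite right---the universal property produces maps \emph{out of} $\bOM^{\Cal C}_{\Cal S}$, not the inverse to ${i_X}_* \oplus {i_Y}_*$; additivity, if it holds, comes rather from the concrete free-abelian-group description (a confined map $V \to X \coprod Y$ decomposes along the coproduct under the category-level hypotheses of the earlier Remark), and the paper itself leaves this point tacit.
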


\begin{cor} (A universal oriented Borel--Moore functor with products) 
Let $\Cal {BT}$ be a class of oriented additive bivariant theories $\bB$ on the same category $\Cal V$ with a class $\Cal C$ of confined morphisms, a class of independent squares, a class $\Cal S$ of specialized maps and a fibered category $\Cal L$ over $\Cal V$. Let $\Cal S$ be nice canonically $\bOB$-oriented for any oriented bivariant theory $\bOB \in \Cal {OBT}$. Then, for each oriented bivariant theory $\bOB \in \Cal {OBT}$ with an orientation $\phi$,

(1) there exists a unique natural transformation of oriented Borel--Moore functors with products
$${\gamma _{\bOB}}_* : {\bOM^{\Cal C} _{\Cal S}}_* \to \bOB_*$$
such that if $\pi_X:X \to pt$ is in  $\Cal S$ 
$${\gamma _{\bOB}}_* [X  \xrightarrow{\op {id}_X} X; L_1, \cdots, L_r] = \phi (L) \circ \cdots \circ \phi (L_r) ({\pi_X}^!(1_{pt})),$$

and 

(2) there exists a unique natural transformation of oriented Borel--Moore functors with products
$${\gamma _{\bOB}}^* : {\bOM^{\Cal C} _{\Cal S}}^* \to \bOB^*$$
such that for any object $X$ 
$${\gamma _{\bOB}}^* [X  \xrightarrow{\op {id}_X} X; L_1, \cdots, L_r] = \phi (L) \circ \cdots \circ \phi (L_r) (1_X).$$ \\
\end{cor}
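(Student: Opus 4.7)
The plan is to derive this corollary as an immediate specialization of the universal oriented bivariant theory theorem proved just above. That theorem already supplies, for each $\bOB \in \Cal{OBT}$ with orientation $\phi$, a unique oriented Grothendieck transformation
\[
\ga_{\bOB} : \bOM^{\Cal C}_{\Cal S} \to \bOB
\]
satisfying $\ga_{\bOB}([X \xrightarrow{\op{id}_X} X; L_1,\dots,L_r]) = \phi(L_1)\circ\cdots\circ\phi(L_r)(\theta_{\bOB}(f))$ whenever $f\colon X \to Y \in \Cal S$. So my entire task is to extract the two desired natural transformations from $\ga_{\bOB}$ and verify that they inherit uniqueness and the stated normalizations.

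First I would define ${\gamma_{\bOB}}_*$ by restricting $\ga_{\bOB}$ to the slice $Y = pt$, i.e.\ setting ${\gamma_{\bOB}}_*(X) := \ga_{\bOB}\colon \bOM^{\Cal C}_{\Cal S}(X \to pt) \to \bOB(X \to pt)$. That this is covariantly natural for confined morphisms is the content of the Grothendieck transformation commuting with pushforwards. For the normalization, assume $\pi_X\colon X \to pt$ lies in $\Cal S$, so that $[X \xrightarrow{\op{id}_X} X; L_1,\dots,L_r]$ is a legitimate generator of ${\bOM^{\Cal C}_{\Cal S}}_*(X)$; the prescribed identity
\[
\ga_{\bOB}([X \xrightarrow{\op{id}_X} X; L_1,\dots,L_r]) = \phi(L_1)\circ\cdots\circ\phi(L_r)(\theta_{\bOB}(\pi_X))
\]
immediately rewrites as $\phi(L_1)\circ\cdots\circ\phi(L_r)(\pi_X^!(1_{pt}))$ by the very definition of the fundamental class $[X]_{\bOB} = \pi_X^!(1_{pt}) = \theta_{\bOB}(\pi_X)$ recalled before the corollary.

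For (2) I would take $Y = X$ with $f = \op{id}_X$; since $\Cal S$ contains all identities by assumption, every $[X \xrightarrow{\op{id}_X} X; L_1,\dots,L_r]$ is a generator of ${\bOM^{\Cal C}_{\Cal S}}^*(X)$. Setting ${\gamma_{\bOB}}^*(X) := \ga_{\bOB}\colon \bOM^{\Cal C}_{\Cal S}(X \xrightarrow{\op{id}_X} X) \to \bOB(X \xrightarrow{\op{id}_X} X)$ gives a contravariantly natural map (naturality for arbitrary morphisms is inherited from pullback-compatibility of $\ga_{\bOB}$). The canonical orientation axioms force $\theta_{\bOB}(\op{id}_X) = 1_X$, so the normalization in the previous theorem specializes exactly to the desired formula
\[
{\gamma_{\bOB}}^*([X \xrightarrow{\op{id}_X} X; L_1,\dots,L_r]) = \phi(L_1)\circ\cdots\circ\phi(L_r)(1_X).
\]

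Uniqueness for both ${\gamma_{\bOB}}_*$ and ${\gamma_{\bOB}}^*$ follows from the uniqueness part of the universal theorem, since any candidate natural transformation satisfying the stated normalization determines, via the Observation~(**) already used in that proof, the values on every generator. The additivity hypothesis on $\bOB$ is what promotes the resulting functors to genuine Borel--Moore functors with products in the sense of Levine--Morel, so that the lemma preceding this corollary applies and ${\gamma_{\bOB}}_*$, ${\gamma_{\bOB}}^*$ qualify as natural transformations of such functors. There is no genuine obstacle here; the only point requiring a little care is checking that the restriction of an oriented Grothendieck transformation to these two functorial slices is compatible with the orientation $\phi$ in the sense demanded by the Borel--Moore setup, which is exactly what items (3)--(6) of the lemma above the theorem already record.
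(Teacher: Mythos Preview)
Your proposal is correct and is precisely the argument the paper intends: the corollary is stated without proof because it is an immediate specialization of the preceding universal oriented bivariant theory theorem to the slices $Y=pt$ and $f=\op{id}_X$, exactly as you describe. Your added remarks on uniqueness via Observation~(**) and on the role of additivity simply make explicit what the paper leaves implicit.
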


\begin{rem} (1) Let $k$ be an arbitrary field. In the case when $\Cal V_k$ is the admissible subcategory of the category of separated schemes of finite type over the field $k$, $\Cal C = \Cal Proj$ is the class of projective morphisms, $\Cal S = \Cal Sm$ is the class of smooth equi-dimensional morphisms and $\Cal L$ is the class of line bundles, then ${\bOM_{\Cal Sm}^{\Cal Proj}}_*(X) = \bOM_{\Cal Sm}^{\Cal Proj}(X \to {pt}) $
is nothing but the oriented Borel--Moore functor with products $\Cal Z_*(X)$ given in \cite{LM-book}. In this sense, our associated contravariant one ${\bOM_{\Cal Sm}^{\Cal Proj}}^*(X) = \bOM_{\Cal Sm}^{\Cal Proj}(X  \xrightarrow{\op {id}_X}  X) $
is a ``cohomological" counterpart of Levine--Morel's ``homological one" $\Cal Z_*(X)$. Note that this cohomological one ${\bOM_{\Cal Sm}^{\Cal Proj}}^*(X)$ for \emph {any} scheme $X$ is the free abelian group generated by $[V  \xrightarrow{h_V} X; L_1, \cdots, L_r]$ such that $h_V:V \to X$ is a \emph {projective and smooth} morphism.

(2) One can see that in (1) $\Cal Proj$ can be replaced by $\Cal Prop$. And furthermore one can consider 
$\bOM_{\Cal Lci}^{\Cal Proj}$ and $\bOM_{\Cal Lci}^{\Cal Prop}$, which will be  treated in \cite{Yokura-bac}. Here $\Cal Lci$ is the class of local complete intersection morphisms.
\end{rem}

An oriented bivariant theory can be defined for any kind of category as long as we can specify classes of ``confined morphisms", ``specialized morphisms" together with nice canonical orientations, ``independent squares" and a ``fibered category" over the given category. The above oriented bivariant theory is the very basis of other oriented bivariant theories of more geometric natures. In  \cite{Yokura-bac} we will deal with a more geometrical oriented bivariant theory, i.e., what could  be called \emph{bivariant algebraic cobordism} or {\it algebraic bivariant cobordism}, which is a bivariant version of Levine--Morel's algebraic cobordism. \\

\section*{Acknowledgments} 

I would like to thank the staff of the  Erwin Schr\"odinger International Institute of Mathematical Physics (ESI) for their hospitality and for a nice atmosphere in which to  work during my stay in August, 2006, during which the present research was initiated. I would also like to thank J\"org Sch\"urmann for his useful comments and suggestions on the earlier version \cite {Yokura-obt1} of the paper. Finally I would like to thank the referee for his/her comments. 


\end{document}